\newcommand{\cF}{\mathcal F} 
\newcommand{\cE}{\mathcal E}
\newcommand{\cL}{\mathcal L}
\newcommand{\cH}{\mathcal H}
\newcommand{\dd}{\mathrm d}  
\newcommand{\TV}{\mathrm{TV}}
\newcommand{\ROF}{\mathrm{ROF}}
\newcommand{\weaklystar}{\stackrel{*}{\rightharpoonup}}
\DeclareMathOperator{\Div}{div}
\newtheorem{thm}{Theorem}[section]
\newtheorem{lem}[thm]{Lemma} 
\newtheorem{prop}[thm]{Proposition}
\theoremstyle{definition}
\newtheorem{defn}[thm]{Definition}
\theoremstyle{remark}
\newtheorem{remark}[thm]{Remark}
\newcommand{\mres}{
	\,\raisebox{-.127ex}{\reflectbox{\rotatebox[origin=br]{-90}{$\lnot$}}}\,
}
\newcommand{\twopartdef}[4]
{
	\left\{
	\begin{array}{ll}
		#1 & #2 \\
		#3 & #4
	\end{array}
	\right.
}
\author{Wojciech G{\' o}rny\thanks{Faculty of Mathematics, Universit\"at Wien, Oskar-Morgerstern-Platz 1, 1090 Vienna, Austria; Faculty of Mathematics, Informatics and Mechanics, University of Warsaw, Banacha 2, 02-097 Warsaw, Poland; {\tt  wojciech.gorny@univie.ac.at}}\quad 
Micha{\l} {\L}asica\thanks{Institute of Mathematics of the Polish Academy of Sciences, {\'S}niadeckich 8, 00-656 Warsaw, Poland; {\tt mlasica@impan.pl}}\quad
Alexandros Matsoukas\thanks{Department of Mathematics, School of Applied Mathematical and Physical Sciences, National Technical 
University of Athens,  Zografou Campus, 157 80 Athens, Greece; {\tt alexmatsoukas@mail.ntua.gr}}}
\title{
Euler--Lagrange equations for variable-growth total variation
}
\newcommand{\note}[1]
{\vskip.3cm
\fbox{%
\parbox{0.93\linewidth}{\footnotesize #1}}
\vskip.3cm}
\newcommand{\namelabel}[1]{%
  \phantomsection
  \renewcommand{\@currentlabel}{#1}
  \label{#1}
}
\newcommand{\labeltext}[2]{%
  \@bsphack
  \csname phantomsection\endcsname 
  \def\@currentlabel{#1}{\label{#2}}%
  \@esphack
}
\def\blue{\color{blue}}
\def\violet{\color{violet}}
\begin{document}

\maketitle 

\begin{abstract}
We consider a class of integral functionals with Musielak--Orlicz type variable growth, possibly linear in some regions of the domain. This includes $p(x)$ power-type integrands with $p(x) \geq 1$ as well as double-phase $p\!-\!q$ integrands with $p=1$. The main goal of this paper is to identify the $L^2$-subdifferential of the functional, including a local characterisation in terms of a variant of the Anzellotti product defined through the Young's inequality. As an application, we obtain the Euler--Lagrange equation for the variant of Rudin--Osher--Fatemi image denoising problem with variable growth regularising term. 
\end{abstract}

\paragraph{Keywords}
total variation, image denoising, ROF model, variable growth

\section{Introduction}

In the recent decades, starting with the seminal work of Rudin, Osher and Fatemi \cite{ROF}, image denoising models based on total variation have attracted much attention. The original Rudin--Osher--Fatemi (ROF) denoising model \cite{ROF}, as reformulated by Chambolle and Lions in \cite{ChambolleLions}, amounts to finding the minimiser of
\[ \ROF(v) := \lambda \TV(v) + \frac{1}{2} \int_\Omega |v-f|^2 \,\dd x,\]
where $\Omega$ is a bounded Lipschitz domain, $f \in L^2(\Omega)$ is a given noisy image and $\lambda > 0$ is a suitably chosen constant. Here, $\TV$ denotes the classical total variation given by 
\begin{equation} \label{TV}
 \TV(v) := \sup \left\{ \int v \Div \xi \,\dd x\ \bigg|\ \xi \in C_c^1(\Omega)^n,\ |\xi|\leq 1 \text{ in } \Omega \right\}. 
\end{equation} 
The ROF model enjoys favorable properties such as simplicity and numerical tractability; a numerical implementation can be done e.g.\ using the Chambolle--Pock algorithm \cite{ChambollePock}. While it has a crucial advantage over linear methods in that it is able to preserve sharp contours, on the hand it suffers from \emph{staircasing}, i.e., emergence of piecewise constant structures from the noise.

One possible solution to the staircasing problem is to modify the functional by introducing a variable-growth term in the total variation. Several particular settings were studied in the literature; the variable exponent case was studied in \cite{CLR}, both from the theoretical and numerical perspective, and the double-phase image restoration was introduced in \cite{HarjulehtoHasto2021}. In this paper, we consider a general framework for such models using the notion of variable-growth total variation introduced in \cite{EHH}. Given a bounded Lipschitz domain $\Omega$ and a function $\varphi \in \Phi_c(\Omega) \cap C(\Omega\times [0, \infty))$, see \cite{EHH} or below in Section \ref{sec:preliminaries} for the definition of $\Phi_c(\Omega)$, we define the variable-growth total variation $\TV_\varphi \colon L^1(\Omega) \to [0,\infty]$ by 
\[ \TV_\varphi(v) := \sup \left\{ \int v \Div \xi - \varphi^*(x, |\xi|) \,\dd x \, \bigg| \, \xi \in C_c^1(\Omega)^n \right\}, \]
where $\varphi^*(x,\cdot)$ is the Legendre--Fenchel dual to $\varphi(x, \cdot)$ for $x \in \Omega$. This coincides with the \emph{dual modular} $\varrho_{V, \varphi}$ from \cite{EHH}. In \cite[Theorem 6.4]{EHH}, it is proved that if $\varphi$ satisfies assumptions \ref{A0}, \ref{aDec} and \ref{RVA1} listed below in Section \ref{sec:preliminaries}, then for $v \in BV(\Omega)$
\begin{equation} \label{TV_explicit}
    \TV_\varphi(v) = \varphi(|Dv|)(\Omega)
\end{equation} 
where the Borel measure $\varphi(|Dv|)$ is defined by 
\[ \varphi(|Dv|)(E) = \int_E \varphi(x,|\nabla v|) \,\dd x  + \int_E \varphi_\infty' \,\dd |D^s v|\]
with $\varphi_\infty'(x) = \lim_{t \to \infty} \varphi(x,t)/t$ called the \emph{recession function} of $\varphi$. The functional $\TV_\varphi$ is a generalisation of classical total variation $\TV$. Indeed, setting $\varphi(x,t) = t$ for all $x \in \Omega$ and $t \in [0, \infty)$, one has
\begin{equation} \label{eq:chi} 
\varphi^*(x,t) = \infty \chi_{(1,\infty)}(t): = \twopartdef{0}{\text{if } t \in [0,1];}{\infty}{\text{if } t>1,}
\end{equation} 
and $\varphi(|Dv|) = |Dv|$ for $v \in BV(\Omega)$. We note that the measure $\varphi(|Dv|)$ may take value $\infty$; in fact $\varphi(|Dv|)(E) < \infty$ if and only if $|D^s v|(E \cap \{\varphi_\infty' = \infty\})=0$ and $\int_E \varphi(x,| \nabla u|) \, \dd x < \infty$. Regardless of whether \eqref{TV_explicit} holds, by definition $\TV_{\varphi}$ is convex and lower semicontinuous. Thus, its restriction to $L^2(\Omega)$ is weakly lower semicontinuous. 

In this paper, in order to provide another point of view to combat possible staircasing effects, we investigate the variable-growth total variation $\TV_\varphi$ and associated minimisation problem for 
\[ \ROF_\varphi(v) := \lambda \TV_\varphi(v) + \frac{1}{2} \int_\Omega |v-f|^2 \,\dd x. \]
The functional $\ROF_\varphi$ is weakly lower semicontinuous and coercive on $L^2(\Omega)$, so it has a minimiser $u$. By the strict convexity of $\ROF_\varphi$, it is unique. Denoting by $\cE_\varphi$ the restriction of $\TV_\varphi$ to $L^2(\Omega)$, the minimiser is identified by the abstract Euler--Lagrange equation
\begin{equation} \label{EL_abstract} 
u - f \in - \lambda \partial \cE_\varphi(u).
\end{equation} 
In order to deliver a verifiable optimality condition, one needs to characterise the subdifferential $\partial \cE_\varphi(u)$. This is the main aim of the present paper. Let us note that this also allows us to characterise solutions to the $L^2$-gradient flow of the $\varphi$-total variation; indeed, since the functional is convex and lower semicontinuous, it follows from the general theory of semigroup solutions to gradient flows in Hilbert spaces (introduced in \cite{Brezis0,Komura}; see e.g. \cite{Brezis} for an overview) that there exists a unique strong solution to the $L^2$-gradient flow of the functional $\cE_\varphi$ and the characterisation of its subdifferential again gives a verifiable condition for solutions of this gradient flow.

Our work belongs to a stream of results where the subdifferential of various variants of total variation is characterised (usually with the application to gradient flows in mind). For the classical $\TV$ this can be found in \cite{ABCM2001} (see also \cite{ACMBook}). This was later generalized to various classes of non-autonomous (i.e.\ the integrand depends on $x$) functionals of linear growth in \cite{ACM2002, Moll2005, GornyMazon}. However, the present paper is to our knowledge the first one, where this is carried out in the variable-growth case. The previous works on the non-autonomous case tend to rely on Reshetnyak's continuity theorem, which requires continuity of the recession function $\varphi'_\infty$. Naturally, this fails in our present setting. Instead, we rely on a recent approximation result from \cite{EHH}, which we slightly improve to obtain approximation by functions smooth up to the boundary. Another novelty which facilitates our goal, is to provide a characterisation of the subdifferential in a natural language of convex duality. More precisely, we define a version of the Anzellotti product (cf.\ \cite{Anzellotti}, \cite{ACMBook}) for our setting and show that it satisfies a kind of Young's inequality. Then, elements of the subdifferential are precisely those for which it is an equality.


The structure of this paper is the following: in Section \ref{sec:preliminaries}, we recall the necessary definitions concerning the variable-growth total variation; in Section~\ref{sec:mainresults}, we present our main results on the characterisation of $\partial \cE_\varphi(u)$; and in Section~\ref{sec:particularcases} we discuss several particular cases to which our results may be applied.

\section{Preliminaries}\label{sec:preliminaries}

We begin this section with the definition of the space of functions with bounded variation. We refer to the book \cite{AFP} for further information. The space of functions with bounded variation is defined as 
$$BV(\Omega)\,=\,\{v\in L^{1}(\Omega): \TV (v)<\infty \},$$
where the definition of the total variation $\TV$ is given in \eqref{TV}. Equivalently, it is known that such functions have a distributional derivative $Dv$, which is a bounded Radon measure. In particular, for $v\in BV(\Omega)$ we have 
$$\TV(v)=|Dv|(\Omega),$$
where $|Dv|(\Omega)$ is the total variation of the measure $Dv$. Thus, the expression in \eqref{TV_explicit} is a natural generalization of this fact, for the variable-growth total variation. 
Moreover, by the Lebesgue decomposition of the Radon measure $Dv$, we have $$Dv=\nabla v \, \mathcal{L}^n +D^s v,$$
where $\nabla v$ denotes the Radon--Nikodym derivative of $Dv$ with respect to the Lebesgue measure~$\mathcal{L}^n$, i.e., the absolutely continuous part of $Dv$ while $D^s v$ is the singular part. The space $BV(\Omega)$ is endowed with the norm
$$\|v\|_{BV(\Omega)}=\|v\|_{L^1(\Omega)}+\TV(v).$$

Throughout the paper, we will require that the function denoted by $\varphi$ satisfies (a subset) of the assumptions below. They ensure that the corresponding Musielak--Orlicz spaces are well-defined and have suitable approximation properties. The notation follows the one of \cite{EHH} and \cite{HarjulehtoHasto}. 

\begin{defn}
Let $\varphi: \Omega \times [0,\infty) \rightarrow [0,\infty]$. We say that $\varphi$ is a {\it 
$\Phi$-prefunction}, if the following conditions hold for every $x \in \Omega$:

\begin{enumerate}
\item[(i)] The map $x \mapsto \varphi(x,|f(x)|)$ is measurable for every measurable $f: \Omega \rightarrow \mathbb{R}$; 

\item[(ii)] The map $t \mapsto \varphi(x,t)$ is non-decreasing;

\item[(iii)] The map $t \mapsto \varphi(x,t)$ has the following limits at $0$ and $\infty$:
\begin{equation*}
\varphi(x,0) = \lim_{t \to 0^+} \varphi(x,t) = 0
\end{equation*}
and
\begin{equation*}
\lim_{t \rightarrow \infty} \varphi(x,t) = \infty.
\end{equation*}
\end{enumerate}
\end{defn}

Recall that a function $f: (0,\infty) \rightarrow \mathbb{R}$ is $L$-almost increasing, if there exists $L \geq 1$ such that
\begin{equation*}
f(s) \leq Lf(t) \quad \text{for all} \quad 0 < s \leq t
\end{equation*}
and similarly $f: (0,\infty) \rightarrow \mathbb{R}$ is $L$-almost decreasing, if there exists $L \geq 1$ such that
\begin{equation*}
Lf(s) \geq f(t) \quad \text{for all} \quad 0 < s \leq t.
\end{equation*}
Note that in the case $L = 1$, $1$-almost increasing functions correspond to non-decreasing functions, and $1$-almost decreasing functions correspond to non-increasing functions.

\begin{defn}
Let $\varphi: \Omega \times [0,\infty) \rightarrow [0,\infty]$ be a $\Phi$-prefunction. Then:

\begin{enumerate}
\item[(i)] We say that $\varphi$ is a {\it weak $\Phi$-function}, if the map
$$ t \mapsto \frac{\varphi(x,t)}{t} $$
is $L_1$-almost increasing on $(0,\infty)$ for some $L_1 \geq 1$ independent of $x$. The set of all weak $\Phi$-functions on $\Omega$ is denoted $\Phi_w(\Omega)$.

\item[(ii)] We say that $\varphi$ is a {\it convex $\Phi$-function}, if $\varphi \in \Phi_w(\Omega)$ and the map
$$ t \mapsto \varphi(x,t) $$
is convex and left-continuous for every $x \in \Omega$. The set of all convex $\Phi$-functions on $\Omega$ is denoted $\Phi_c(\Omega)$.
\end{enumerate}
\end{defn}

We now define the Musielak--Orlicz spaces and the corresponding Sobolev spaces, for $\varphi \in \Phi_{c}(\Omega)$. By $L^0 (\Omega)$ we denote the linear space of all measurable functions. Given $\varphi \in \Phi_{c}(\Omega)$, the Musielak--Orlicz space $L^{\varphi}(\Omega)$ is defined by
    $$L^{\varphi}(\Omega)\,=\,\{f\in L^{0}(\Omega)\, | \, \varrho_{\varphi}(\lambda f) < \infty \, \text{ for some } \, \lambda >0 \},$$
    where the modular is given by $\varrho_{\varphi}(f)=\int_{\Omega}\varphi (x,|f(x)|) \, \dd x.$
    When equipped with the Luxemburg norm 
    $$\|f\|_{L^{\varphi}(\Omega)}\,=\,\inf \bigg\{ \lambda > 0: \int_{\Omega}\varphi \left(x,\frac{|f|}{\lambda}\right) \, \dd x \le 1 \bigg\}, $$ it becomes a Banach space. If in addition $\varphi$ satisfies \ref{aDec} (see the next definition), then we can have a direct definition of $L^{\varphi}(\Omega)$ using the modular $\varrho_{\varphi}(\cdot)$:
$$L^{\varphi}(\Omega)\,=\,\{f\in L^{0}(\Omega)\, | \, \varrho_{\varphi}(f) < \infty \}.$$ The corresponding Musielak--Orlicz--Sobolev space $W^{1,\varphi}(\Omega)$ is defined as 
$$W^{1,\varphi}(\Omega)\,=\,\{v \in W^{1,1}(\Omega): |v| , |\nabla v| \in L^{\varphi}(\Omega) \},$$
where $\nabla v$ denotes the weak derivative of $v$. We equip this space with the norm
$$\|v\|_{W^{1,\varphi}(\Omega)}\,=\,\|v\|_{L^{\varphi}(\Omega)}+\|\nabla v\|_{L^{\varphi}(\Omega)}.$$
In the paper \cite{EHH}, the authors introduced an extension of the $BV(\Omega)$ space namely, the space $BV^{\varphi}(\Omega)$ built upon a $\Phi$-function $\varphi$. It is mentioned that the dual modular $\varrho_{V,\varphi}$, which in our terminology is $\TV_{\varphi}$, does not generate the dual norm $V_{\varphi}$ of this space. Let us emphasise that our approach does not explicitly rely on the properties of these function spaces. Our main focus is the $\TV_{\varphi}$ functional, with the two key ingredients being the representation formula~\eqref{TV_explicit} proved in \cite[Theorem 6.4]{EHH}, and the modular approximation by smooth functions \cite[Proposition 7.1]{EHH}.


With these definitions in mind, we are ready to state the required assumptions on $\varphi$. We follow the notation in \cite{EHH}, additionally denoting by \ref{RVA1} the \emph{restricted \ref{VA1} condition} introduced in \cite[Definition 3.1]{EHH}.

\begin{defn}
Let $\varphi: \Omega \times [0,\infty) \rightarrow [0,\infty]$ and $p,q \geq 1$. We define the following conditions:

\begin{enumerate}
\item[(A0)]\labeltext{(A0)}{A0} There exists $\beta \in (0,1]$ such that for every $x \in \Omega$
\begin{equation*}
\varphi(x,\beta) \leq 1 \leq \varphi(x,1/\beta);
\end{equation*}

\item[${\rm (aInc)}_p$]\labeltext{${\rm (aInc)}_p$}{aIncp}\labeltext{${\rm (aInc)}_2$}{aInc2} There exists $L_p \geq 1$ (independent of $x$) such that for every $x \in \Omega$ the map
\begin{equation*}
t \mapsto \frac{\varphi(x,t)}{t^p}
\end{equation*}
is $L_p$-almost increasing in $(0,+\infty)$;

\item[(aInc)]\labeltext{(aInc)}{aInc} The condition \ref{aIncp} holds for some $p > 1$;

\item[${\rm (aDec)}_q$]\labeltext{${\rm (aDec)}_q$}{aDecq}\labeltext{${\rm (aDec)}_2$}{aDec2} There exists $L_q \geq 1$ (independent of $x$) such that for every $x \in \Omega$ the map
\begin{equation*}
t \mapsto \frac{\varphi(x,t)}{t^q}
\end{equation*}
is $L_q$-almost decreasing in $(0,+\infty)$;

\item[(aDec)]\labeltext{(aDec)}{aDec} The condition \ref{aDecq} holds for some $q > 1$;

\item[(A1)]\labeltext{(A1)}{A1} For every $K > 0$, there exists $\beta \in (0,1]$ such that for every $x,y \in \Omega$
\begin{equation*}
\varphi(x,\beta t) \leq \varphi(y,t) + 1 \quad \mbox{if} \quad \varphi(y,t) \in \bigg[0, \frac{K}{|x-y|^n} \bigg];
\end{equation*}

\item[(VA1)]\labeltext{(VA1)}{VA1} For every $K > 0$, there exists a modulus of continuity $\omega$ such that for every $x,y \in \Omega$
\begin{equation*}
\varphi \bigg(x, \frac{t}{1+\omega(x-y)} \bigg) \leq \varphi(y,t) + \omega(|x-y|) \quad \mbox{if} \quad \varphi(y,t) \in \bigg[0, \frac{K}{|x-y|^n} \bigg];
\end{equation*}

\item[(RVA1)]\labeltext{(RVA1)}{RVA1} We have $\varphi \in \Phi_w(\Omega)$, the condition \ref{A1} holds, and for every $K > 0$ there exists a modulus of continuity $\omega$ such that
\begin{equation*}
\varphi \bigg(x, \frac{t}{1+\omega(x-y)} \bigg) \leq \varphi(y,t) + \omega(|x-y|) \quad \mbox{if} \quad \varphi(y,t) \in \bigg[0, \frac{K}{|x-y|^n} \bigg]
\end{equation*}
for every $x,y \in \Omega$ with $\varphi'_\infty(x) < \infty$ or $\varphi'_\infty(y) < \infty$.
\end{enumerate}
\end{defn}

A discussion on these conditions and the role they play for the validity of approximation results in the Musielak--Orlicz spaces may be found in \cite{EHH}; here, let us just note a few small observations. The conditions \ref{aDecq} and \ref{aIncp} compare the rate of growth of $\varphi$ with power functions from above/below, while \ref{aDec} and \ref{aInc} say that the growth is at most/at least polynomial.

The condition \ref{A1} is an almost continuity condition in the first coordinate, which in the variable exponent case corresponds to the log-H\"older continuity of the inverse of the exponent. The 'vanishing \ref{A1}' condition \ref{VA1} and the 'restricted \ref{VA1}' condition \ref{RVA1} are its versions which play a similar role. We later present some examples of these conditions in particular settings.

\begin{remark}
Note that in the definitions above we allow for a $\Phi$-prefunction to attain the value $\infty$. While it is not always assumed in the literature, it is necessary in our setting; in the next sections, while usually $\varphi$ only takes finite values, its conjugate function $\varphi^*$ does not. This is typical in the linear growth setting: for instance, if we consider the isotropic total variation and take $\varphi(x,t) = t$, we have that $\varphi^*(x,t) = \infty \chi_{(1,\infty)}(t)$ (see \eqref{eq:chi}). Note that such $\varphi^*$ fulfills many of the assumptions listed above: \ref{A0} holds for any $\beta \leq 1$; \ref{aIncp} for any $p > 1$ with constant $L_p = 1$; and \ref{VA1} with any modulus of continuity.

While for the reasons given above we do not generally assume finite values of $\varphi$, some of the assumptions listed above are closely related to this fact. Firstly, observe that if $\varphi$ satisfies \ref{A0} and the condition \ref{aDecq} for any $q \geq 1$, which is a natural growth assumption in the present setting, the values of $\varphi$ are necessarily finite. The condition \ref{A0} itself does not require the values to be finite, and whenever $\varphi$ verifies this condition then the conjugate $\varphi^*$ also does, see \cite[Theorem 2.4.8]{HarjulehtoHasto} and \cite[Lemma 3.7.6]{HarjulehtoHasto}. Furthermore, the regularity conditions \ref{A1} and \ref{VA1} {\it assume} finiteness of the value at a given point and state some estimates on the behaviour of $\varphi$ close to this point---therefore, for their statement there is no need to require that the values of $\varphi$ are finite. 
\end{remark}


Let us collect here several simple properties of $\Phi$-functions shown in \cite{HarjulehtoHasto}. First of all, it is a general property that $\varphi^*$ is convex as a result of taking a convex conjugate: more precisely, by \cite[Lemma 2.5.8]{HarjulehtoHasto} it holds that
\begin{equation}\label{eq:phistarisconvex}
\varphi \in \Phi_{w}(\Omega) \Rightarrow \varphi^{*} \in \Phi_{c}(\Omega).
\end{equation}
Moreover, the following implication holds:
\begin{equation}\label{eq:leftcontinuitylowersemicontinuity}
\varphi \in \Phi_{w}(\Omega) \mbox{ is left-continuous }\Rightarrow \varphi \mbox{ is lower semicontinuous}.
\end{equation}
In particular, this holds for $\varphi \in \Phi_{c}(\Omega)$.

Furthermore, supposing that $\mathcal{L}^n(\Omega) < \infty$ and the weak $\Phi$-function $\varphi \in \Phi_w(\Omega)$ satisfies \ref{A0} and \ref{aIncp}, by \cite[Corollary 3.7.9]{HarjulehtoHasto} we have
\begin{equation}\label{eq:inclusionforinc}
L^\varphi(\Omega) \hookrightarrow L^p(\Omega)
\end{equation}
with the estimate
\begin{equation}\label{eq:inclusionforinc2}
c \int_\Omega |f(x)|^p \, \dd x \leq \int_\Omega \varphi(x,|f|(x)) \, \dd x + \mathcal{L}^n(\Omega)
\end{equation}
for some $c = c(\Omega, L_p, \beta) > 0$, where $\beta$ is the constant in condition \ref{A0}.



\section{Results}\label{sec:mainresults}

This section is organised as follows. First, we show several preliminary results concerning the relevant function spaces, in particular an up-to-the-boundary approximation by smooth functions. Then, we provide an abstract characterisation of the subdifferential via convex duality techniques, and subsequently we introduce a notion of Anzellotti-type pairings to provide a verifiable condition for being in the subdifferential.

\subsection{Preliminary lemmata}

Let us denote by $\cE_\varphi$ the restriction of $\TV_\varphi$ to $L^2(\Omega)$. Let $D_\varphi$ denote the domain of $\cE_\varphi$, i.\,e., 
\begin{equation}
D_\varphi := \left\{ v \in L^2(\Omega): \, \cE_\varphi(v) < \infty \right\}.
\end{equation}
Then, under natural assumptions on the growth of $\varphi$, we have a Poincar\'e-type inequality which implies the following inclusion.

\begin{prop}\label{prop:poincare} 
Suppose that $\varphi \in \Phi_c \cap C(\Omega \times [0, \infty))$ satisfies the conditions \ref{A0}, \ref{RVA1} and \ref{aDec}, and that $\varphi^*$ satisfies the condition \ref{VA1}. Then,
$$D_\varphi \subset L^\varphi(\Omega).$$ 
\end{prop}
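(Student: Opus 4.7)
The plan is to combine the representation formula \eqref{TV_explicit} with two-sided power bounds on $\varphi$, so as to reduce the statement to the classical BV-Sobolev-Poincaré inequality together with the $L^2$-hypothesis.

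Fix $v \in D_\varphi$, so that $v \in L^2(\Omega)$ and $\TV_\varphi(v) < \infty$. From \ref{A0} and \ref{aDec} (with exponent $q$) I would first deduce the uniform-in-$x$ upper bound $\varphi(x,t) \le C(1+t^q)$ on $\Omega \times [0,\infty)$: applying \ref{aDec} to the pair $(\beta,1)$ and using $\varphi(x,\beta) \le 1$ gives $\varphi(x,1) \le L_q \beta^{-q}$; a further application of \ref{aDec} between $1$ and arbitrary $t \ge 1$, combined with monotonicity of $\varphi(x,\cdot)$ for $t \le 1$, yields the claim. Since $|\Omega| < \infty$, this already produces the embedding $L^q(\Omega) \hookrightarrow L^\varphi(\Omega)$.

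Conversely, the $L_1$-almost monotonicity of $t \mapsto \varphi(x,t)/t$ (valid because $\varphi \in \Phi_c(\Omega) \subset \Phi_w(\Omega)$) together with $\varphi(x,1/\beta) \ge 1$ from \ref{A0} produces the linear lower bound $\varphi(x,t) \ge c(t - 1/\beta)_+$ and hence $\varphi_\infty'(x) \ge c > 0$ uniformly in $x$. Plugging these into \eqref{TV_explicit} gives $|Dv|(\Omega) \le c^{-1}\TV_\varphi(v) + |\Omega|/\beta$, so that $v \in BV(\Omega)$. The classical BV-Sobolev-Poincaré inequality then delivers $\|v\|_{L^{n/(n-1)}(\Omega)} \le C(\|v\|_{L^1(\Omega)} + |Dv|(\Omega))$; combined with $v \in L^2(\Omega) \subset L^1(\Omega)$, we obtain $v \in L^s(\Omega)$ for every $s \in [1, \max(2, n/(n-1))]$.

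In the natural regime $q \le \max(2, n/(n-1))$—which in particular covers the planar image-denoising case $n = 2$—this yields $v \in L^q(\Omega)$, so by the growth bound $\int_\Omega \varphi(x,|v|)\,\dd x < \infty$, and hence $v \in L^\varphi(\Omega)$ via the modular characterisation of $L^\varphi$ available under \ref{aDec}. I expect the main obstacle to arise when $q$ exceeds this threshold: pure BV-embedding is then insufficient, and the standard Musielak-Orlicz-Sobolev-Poincaré inequality is not directly available, since it typically requires \ref{aInc} with exponent $p > 1$, a condition that the linear-growth regions of $\varphi$ violate. The natural way around should combine the smooth approximation of \cite[Proposition 7.1]{EHH} (applicable thanks to \ref{RVA1}) with a Poincaré-type inequality for the resulting $W^{1,\varphi}$-approximants, with the role of the \ref{VA1} regularity of $\varphi^*$ being to guarantee density of smooth functions and the mollification estimates needed for that argument.
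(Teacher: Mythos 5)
Your two-sided bounds are sound: \ref{A0} plus \ref{aDec} indeed give $\varphi(x,t)\le C(1+t^q)$ uniformly in $x$, hence $L^q(\Omega)\hookrightarrow L^\varphi(\Omega)$, and the almost-monotonicity of $t\mapsto\varphi(x,t)/t$ plus \ref{A0} give the linear lower bound, so every $v\in D_\varphi$ lies in $BV(\Omega)$. But this only proves the proposition in the restricted regime $q\le\max(2,n/(n-1))$, which you acknowledge; note that even for $n=2$ this means at most quadratic growth, so it does \emph{not} cover, say, variable-exponent growth with $\sup p>2$ or double-phase with large $q$. The statement as posed allows any finite $q$ in \ref{aDec}, so the general case is the actual content, and your closing sentence only gestures at it. The concrete missing idea is how to apply a Musielak--Orlicz Poincar\'e inequality to $v$ \emph{without already knowing} $v\in L^\varphi(\Omega)$: the smooth approximation of \cite[Proposition 7.1]{EHH} converges in $L^\varphi(\Omega)$, which presupposes membership of the target in that space, so invoking it directly on $v$ is circular.

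The paper's proof resolves this with a truncation device that your sketch omits. One first shows, via the chain-rule formula for $DT_m v$, that the truncations $T_m v$ satisfy $\cE_\varphi(T_m v)\le\cE_\varphi(v)$; since $T_m v\in L^\infty(\Omega)\subset L^\varphi(\Omega)$, the approximation result applies to $T_m v$ and produces $v^k_m\in C^\infty(\Omega)$ converging in $L^\varphi(\Omega)$ with convergence of the gradient modulars. The Poincar\'e--Sobolev inequality of \cite[Theorem 6.2.8]{HarjulehtoHasto} (which does not require \ref{aInc}, contrary to your worry) applied to $v^k_m$, together with the \ref{aDec}-based bound $\|\nabla v^k_m\|_{L^\varphi(\Omega)}\le\max\{\cE_\varphi(v^k_m),(L_q\cE_\varphi(v^k_m))^{1/q}\}$, yields after letting $k\to\infty$ the bound \eqref{trunc_bound} on $\|T_m v\|_{L^\varphi(\Omega)}$, uniform in $m$ because $\cE_\varphi(T_mv)\le\cE_\varphi(v)$ and $\|T_mv\|_{L^1}\le\|v\|_{L^1}$. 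Finally the Fatou property of the modular (\cite[Lemma 3.3.8 b)]{HarjulehtoHasto}) lets $m\to\infty$ and gives $v\in L^\varphi(\Omega)$. Without the truncation step and the uniform-in-$m$ bound, your argument cannot be completed in the general case, so as it stands the proposal has a genuine gap.
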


\begin{proof} 
Given $v \in D_\varphi$ and $m>0$, let $T_m v \in L^\infty(\Omega) \subset L^\varphi(\Omega)$ denote the simple truncation of~$v$ at level $m$, i.e.,
$T_m: \mathbb{R} \rightarrow \mathbb{R}$ is defined as
$$T_m(r):= \left\{ \begin{array}{lll} m \quad &\hbox{if} \ \ r > m; \\[5pt] r \quad &\hbox{if} \ \ \vert r \vert \leq m; \\[5pt] - m \quad &\hbox{if} \ \ r < -m. \end{array} \right.$$
Fix any $m > 0$. By \cite[Theorem 3.99]{AFP}, it holds that $T_m v \in BV(\Omega)$ and
\begin{equation}\label{D_trunc}
D T_m v = \chi_{\{-m<v<m\}}\nabla v \mathcal{L}^{n} + (T_m(v^{+}) - T_m(v^{-})) \nu_v \, \cH^{n-1} \mres J_v +  \chi_{\{-m<\tilde{v}<m\}} D^{c} v, 
\end{equation} 
where $\tilde{v}$ is the approximate limit of $v$. To see that $\cE_\varphi(T_m v) \leq \cE_\varphi(v)$, recall that the total variation defines a Borel measure $\varphi(|Dv|)$ in $\Omega$. For all $m>0$, denote 
$$\Omega_{m} = \Omega \cap \{ -m < v < m \}.$$
The approximate jump points of $T_{m}(v)$ appear in the sets $\Omega_{m}$ because the truncated functions $T_{m}(v)$ are constant elsewhere. More precisely, we have
\begin{equation} 
(T_m(v^{+}) - T_m(v^{-})) \nu_v \, \cH^{n-1} \mres J_v = (v^{+} - v^{-}) \nu_v \, \cH^{n-1} \mres J_v\cap \{-m<\tilde{v}<m\}
\end{equation}
which yields that 
$$\cE_\varphi(T_m v)=\varphi(|Dv|)(\Omega_{m}) \leq \varphi(|Dv|)(\Omega)=\cE_\varphi(v),$$
so the desired claim holds.

By \cite[Proposition 7.1]{EHH}, for every $m \in \mathbb{N}$ there exists a sequence $(v^k_m) \subset C^\infty(\Omega)$ such that 
\begin{equation} \label{trunc_conv} 
v^k_m \to T_m v \text{ in } L^\varphi(\Omega) \quad \text{and} \quad \cE_\varphi(v^k_m) = \int_\Omega \varphi(x,| \nabla v^k_m|)\, \dd x \to \cE_\varphi(T_m v) 
\end{equation}
as $k \to \infty$. Without loss of generality all $v^k_m$ belong to $D_\varphi$. From the Poincar{\' e}--Sobolev inequality in the form given in \cite[Theorem 6.2.8]{HarjulehtoHasto}, we deduce
\[ \|v^k_m\|_{L^\varphi(\Omega)} \leq C \left( \|\nabla v^k_m\|_{L^\varphi(\Omega)} + \|v^k_m\|_{L^1(\Omega)}\right). \]
By \ref{aDec}, we have 
\[\|\nabla v^k_m\|_{L^\varphi(\Omega)} \leq \max\left\{\cE_\varphi(v^k_m), (L_q \cE_\varphi(v^k_m))^{1/q}\right\} \]
with $q\geq 1$. Thus, passing to the limit $k \rightarrow \infty$ using \eqref{trunc_conv},  
\begin{multline} \label{trunc_bound} \|T_m v\|_{L^\varphi(\Omega)} \leq C \left( \max\left\{\cE_\varphi(T_m v), (L_q \cE_\varphi(T_m v))^{1/q}\right\} + \|T_m v\|_{L^1(\Omega)}\right)\\ \leq C \left( \max\left\{\cE_\varphi(v), (L_q \cE_\varphi(v))^{1/q}\right\} + \|v\|_{L^1(\Omega)}\right).
\end{multline} 
Since $\varphi \in \Phi_{c}(\Omega)$, it has the Fatou property by \cite[Lemma 3.3.8 b)]{HarjulehtoHasto}. Letting $m \rightarrow \infty$, we obtain that $v \in L^\varphi(\Omega)$.
\end{proof} 

\begin{remark} By the explicit expression of $\cE_\varphi$ given in \cite[Theorem 6.4]{EHH} and the above proposition, we see that the effective domain $D_\varphi$ of $\cE_\varphi$ consists of those functions $v\in BV(\Omega)\cap L^2(\Omega)$ such that $v \in L^\varphi (\Omega), |\nabla v| \in L^\varphi (\Omega)$ and $|D^s v| (\{\varphi_\infty' =\infty \})=0$.   
\end{remark}




In the sequel, we assume that $\varphi$ satisfies the conditions \ref{A0}, \ref{RVA1} and \ref{aDec}, and that its conjugate $\varphi^*$ satisfies \ref{VA1}. In particular, $\varphi$ satisfies \ref{aDecq} with some (finite) $q \geq 2$. Let us note that these are exactly the conditions needed to apply the approximation by smooth functions result from \cite{EHH}. Consider the set 
\[ X_{\varphi^*,2} = \left\{\xi \in L^1(\Omega)^n\ \bigg|\ \int_\Omega \varphi^*(x,|\xi|)\, \dd x < \infty, \ \Div \xi \in L^2(\Omega)\right\}.\]
Once $\varphi$ satisfies \ref{A0} and \ref{aDecq} with $q \geq 2$, its conjugate function $\varphi^*$ satisfies \ref{A0} and \ref{aIncp} with $1 < p \leq 2$, where $\frac1p + \frac1q = 1$, so by the estimate \eqref{eq:inclusionforinc2} applied for $\varphi^*$ the condition $\int_\Omega \varphi^*(x,|\xi|)\, \dd x < \infty$ implies $\xi \in L^p(\Omega)^n$ with some $1<p\leq 2$. In particular, 
\[ X_{\varphi^*,2} \subset X_{p,p} := \bigg\{ \xi \in L^p(\Omega)^n\ \bigg|\ \Div \xi \in L^p(\Omega) \bigg\}.\]
The significance of this inclusion is the following. Let us recall that there exists a bounded linear operator $X_{p,p} \ni \xi \mapsto \nu\cdot \xi \in W^{1-1/q,q}(\partial \Omega)^*$ that coincides with the usual normal boundary trace for $\xi$ smooth, see \cite[Lemma 3.10]{NovotnyStravskraba}. The space $W^{1-1/q,q}(\partial \Omega)$ is the trace space of $W^{1,q}(\Omega)$, and the Gauss--Green formula 
\begin{equation}\label{Green}
     \int_\Omega v \Div \xi + \int_\Omega \xi \cdot \nabla v = \langle \nu \cdot \xi, v|_{\partial\Omega} \rangle_{W^{1-1/q,q}(\partial \Omega)^*,W^{1-1/q,q}(\partial \Omega)} 
\end{equation}
holds whenever $\xi \in X_{p,p}$ and $v \in W^{1,q}(\Omega)$.  In particular, every vector field in $X_{\varphi^*,2}$ admits a normal trace, and the Gauss--Green formula holds whenever $\xi \in X_{\varphi^*,2}$ and $v \in W^{1,q}(\Omega)$; we will heavily rely on this fact in the characterisation of the subdifferential of $\cE_\varphi$.

\begin{remark} 

In the definition of the class of admissible vector fields $X_{\varphi^*,2}$, we could have assumed that $\xi \in L^{\varphi^{*}}(\Omega)^n$ instead of finiteness of the modular $\int_{\Omega} \varphi^{*}(x,|\xi|) \, \dd x $.  Since we do not assume that $\varphi^{*}$ satisfies \ref{aDec}, $L^{\varphi^{*}}(\Omega)^n$ may contain elements with infinite modular, so this would be more inclusive, thus strengthening a little some of our auxilliary results. On the other hand, this matters little in the context of the main goal of this paper, since vector fields that appear in the characterization of the subdifferential naturally have finite modular.
    
\end{remark}




In the proof of Proposition \ref{prop:poincare}, we used \cite[Proposition 7.1]{EHH}, where a suitable approximating sequence in $C^\infty(\Omega)$ was obtained for any $v \in D_\varphi$. For our purposes, we will need approximating sequences to be smooth up to the boundary. 


\begin{lem}\label{lem:approx_improv}
Suppose that $\varphi \in \Phi_c \cap C(\Omega \times [0, \infty))$ satisfies \ref{A0}, \ref{RVA1} and \ref{aDec}, and $\varphi^*$ satisfies \ref{VA1}. For any
 $v \in D_\varphi$, there exists a sequence $v^k \in C^\infty(\overline{\Omega})$ such that 
 \[v^k \to v  \text{ in } L^2(\Omega) \text{ and in } L^\varphi(\Omega), \qquad \int_\Omega \varphi(x,|\nabla v^k|) \,\dd x \to \cE_\varphi(v).\] 
\end{lem}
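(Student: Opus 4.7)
The plan is to adapt \cite[Proposition 7.1]{EHH} by precomposing with a smooth diffeomorphism that pushes $\overline{\Omega}$ strictly into $\Omega$, so that the resulting $C^\infty(\Omega)$ approximations automatically become smooth up to $\partial\Omega$. First I reduce to the case $v \in D_\varphi \cap L^\infty(\Omega)$ via truncation: the $T_m v$ satisfy $\cE_\varphi(T_m v) \leq \cE_\varphi(v)$ (as derived in the proof of Proposition \ref{prop:poincare}) and $T_m v \to v$ in $L^2(\Omega)$ and in $L^\varphi(\Omega)$, the latter by dominated convergence using $v \in L^\varphi(\Omega)$ established in the same proposition. The matching lower bound $\cE_\varphi(v) \leq \liminf_m \cE_\varphi(T_m v)$ follows from the $L^2$-lower semicontinuity of $\cE_\varphi$, yielding $\cE_\varphi(T_m v) \to \cE_\varphi(v)$, so a diagonal extraction reduces matters to bounded $v$.

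Next, exploiting the Lipschitz regularity of $\partial \Omega$, I construct a smooth vector field $\mathbf{V} \in C^\infty(\overline{\Omega}; \bR^n)$ transverse to $\partial\Omega$ and pointing strictly inward, by patching through a smooth partition of unity the inward directions $n_i$ associated with a finite Lipschitz boundary chart cover $\{U_i\}$ of $\partial\Omega$. For $t > 0$ sufficiently small, $\Psi_t(x) := x + t\mathbf{V}(x)$ is a $C^\infty$-diffeomorphism of $\overline{\Omega}$ onto $K_t := \Psi_t(\overline{\Omega}) \Subset \Omega$, with $\Psi_t \to \mathrm{id}$ in $C^1(\overline{\Omega})$ as $t \to 0^+$. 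Applying \cite[Proposition 7.1]{EHH} gives $w^k \in C^\infty(\Omega)$ with $w^k \to v$ in $L^\varphi(\Omega)$ and $\int_\Omega \varphi(x, |\nabla w^k|)\,\dd x \to \cE_\varphi(v)$. Then $v^{k,t} := w^k \circ \Psi_t$ lies in $C^\infty(\overline{\Omega})$, because $w^k$ is smooth on the compact subset $K_t$ of $\Omega$.

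The conclusion follows by a diagonal extraction $t_k \to 0^+$. For $v^{k,t_k} \to v$ in $L^2$ and $L^\varphi$, at fixed $k$ one uses a change of variables $y = \Psi_t(x)$ together with dominated convergence based on the continuity of $\varphi(\cdot, s)$ (ensured by $\varphi \in C(\Omega \times [0,\infty))$) and boundedness of $w^k$ on $K_t \Subset \Omega$; this, combined with $w^k \to v$, delivers the required convergences. For the modular convergence of gradients, the change of variables gives
\[
\int_\Omega \varphi(x, |\nabla v^{k,t}|)\,\dd x = \int_{K_t} \varphi\bigl(\Psi_t^{-1}(y), |(D\Psi_t)^\top(\Psi_t^{-1}(y))\,\nabla w^k(y)|\bigr)\,|\det D\Psi_t^{-1}(y)|\,\dd y,
\]
which for fixed $k$ and $t \to 0^+$ converges to $\int_\Omega \varphi(y, |\nabla w^k(y)|)\,\dd y$ by the uniform convergence $\Psi_t \to \mathrm{id}$ in $C^1(\overline{\Omega})$ and the continuity of $\varphi$ in its first argument.

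The main obstacle is the simultaneous control of the limits $k \to \infty$ and $t_k \to 0^+$: the parameters $t_k$ must be chosen slowly enough that the change-of-variables discrepancy is dominated by the (fixed) rate of convergence $\int_\Omega \varphi(x,|\nabla w^k|)\,\dd x \to \cE_\varphi(v)$. This is delicate because $|\nabla w^k|$ may degenerate near $\partial\Omega$ as $k$ grows, so uniform majorants on $K_{t_k}$ deteriorate as $t_k \to 0^+$. The continuity of $\varphi$ together with the \ref{RVA1}/\ref{VA1} regularity of $\varphi$ and $\varphi^*$ control the distortion of the modular under small perturbations in the first argument, providing the quantitative estimates that make the diagonal extraction admissible.
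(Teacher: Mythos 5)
The overall architecture (get interior-smooth approximations from \cite[Proposition 7.1]{EHH}, then repair the boundary) matches the paper, but your boundary step has a genuine gap. The paper repairs the boundary by quoting \cite[Corollary 4.6]{Juusti} to get $C^\infty(\overline{\Omega})$-density in $W^{1,\varphi}(\Omega)$ in \emph{norm}, and then upgrades norm convergence of the gradients to modular convergence via a convexity/almost-monotonicity argument. You instead try to produce the boundary-smooth approximants by hand, composing $w^k$ with the inward shift $\Psi_t$. The problematic step is the claim that, for fixed $k$,
\[
\int_{K_t} \varphi\bigl(\Psi_t^{-1}(y), |(D\Psi_t)^\top\nabla w^k(y)|\bigr)\,|\det D\Psi_t^{-1}(y)|\,\dd y \ \longrightarrow\ \int_\Omega \varphi(y,|\nabla w^k(y)|)\,\dd y
\]
``by the uniform convergence $\Psi_t\to\mathrm{id}$ in $C^1$ and the continuity of $\varphi$ in its first argument.'' Joint continuity of $\varphi$ on $\Omega\times[0,\infty)$ gives pointwise convergence of the integrand only on compact subsets of $\Omega$ where $|\nabla w^k|$ is bounded; it gives no comparison between $\varphi(x,s)$ and $\varphi(y,s)$ that is uniform in $s$. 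For $\varphi(x,s)=s^{p(x)}$ one has $\varphi(x,s)/\varphi(y,s)=s^{p(x)-p(y)}$, which is unbounded in $s$ however close $x$ and $y$ are; since $w^k\in C^\infty(\Omega)$ only, $|\nabla w^k|$ may blow up near $\partial\Omega$, exactly where the displacement acts, so there is no integrable majorant and dominated convergence does not apply. This is the classical fact that translation is not a bounded operation on variable-exponent (and general Musielak--Orlicz) spaces.

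Your closing appeal to \ref{A1}/\ref{VA1}/\ref{RVA1} does not close this. Those conditions compare $\varphi(x,\cdot)$ and $\varphi(y,\cdot)$ only under the hypothesis $\varphi(y,t)\le K|x-y|^{-n}$; in the mollification proofs of \cite{EHH} and \cite{Juusti} this hypothesis is verified because $t$ is an \emph{average} of $|\nabla u|$ over a ball of radius comparable to $|x-y|$, controlled by the $L^1$-bound on $\varphi(\cdot,|\nabla u|)$ via Jensen. In your pure-translation argument there is no averaging, and the pointwise value $\varphi(y,|\nabla w^k(y)|)$ need not satisfy any bound of the form $K\,t^{-n}$; moreover \ref{A1} only yields $\varphi(x,\beta s)\le\varphi(y,s)+1$ with a fixed $\beta<1$ and an additive error, which at best gives a $\limsup$ bounded by a multiple of the modular plus a constant, not convergence, and the sharper \ref{RVA1} estimate is restricted to points where a recession function is finite. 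So the quantitative control you assert is precisely the content of the cited approximation theorems and cannot be recovered by the composition trick alone. (Minor additional remarks: the initial truncation to bounded $v$ is superfluous for your argument, since boundedness of $w^k$ on $K_t$ already follows from $K_t\Subset\Omega$; and the difficulty is already present at fixed $k$, not only in the joint limit $k\to\infty$, $t_k\to 0^+$.)
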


\begin{proof}





By \cite[Proposition 7.1]{EHH}, there exists a sequence $\widetilde{v}^k \in C^\infty(\Omega)$ such that the postulated convergences hold. Thus, using a diagonal argument, we see that it is enough to prove the assertion for $v \in C^\infty(\Omega)\cap D_\varphi$. In particular, we can assume that $v \in W^{1, \varphi}(\Omega) \cap L^2(\Omega)$. Since $\Omega$ is bounded, assumption (A2) appearing in the paper \cite{Juusti} is satisfied by $\varphi$ \cite[Lemma 4.2.3]{HarjulehtoHasto}. Thus, by \cite[Corollary 4.6]{Juusti}, there exists a sequence $v^k \in C^\infty(\overline{\Omega})$ such that $v^k \to v$ in $W^{1,\varphi}(\Omega)$. Analysing the reasoning in \cite{Juusti}, we also have $v^k \to v$ in $L^2(\Omega)$. By convexity of $\varphi(x, |\cdot|)$, we have
\[ \varphi(x, |\nabla v^k|) \leq (1-t) \varphi(x, |\nabla v|/(1-t)) + t \varphi(x, |\nabla v^k - \nabla v|/t)  \] 
for a.e.\ $x \in \Omega$ and all $0 < t < 1$. By convergence of $v^k$ in $W^{1,\varphi}(\Omega)$, there exists a sequence $a^k >0$, $a^k \to 0$ as $k \to \infty$, such that \[\int_\Omega \varphi(x, |\nabla v^k - \nabla v|/a^k) \, \dd x \leq 1 \quad \text{for } k=1,2,\ldots\]
Thus, since $t \mapsto \varphi(\cdot, t)/t$ is $L_1$-almost increasing, we have for a fixed $t$ and $k$ large enough 
\[ t\int_\Omega \varphi(x, |\nabla v^k - \nabla v|/t) \, \dd x \leq L_1 a^k \int_\Omega \varphi(x, |\nabla v^k - \nabla v|/a^k) \, \dd x \leq L_1 a^k \to 0 \quad \text{as } k\to \infty.\]
Hence, we obtain
\begin{equation} \label{approx_limsup_est}\limsup_{k \to \infty} \int_\Omega \varphi(x, |\nabla v^k|) \, \dd x \leq  (1-t) \int_\Omega\varphi(x, |\nabla v|/(1-t)) \, \dd x
\end{equation} 
for any $0 < t <1$. Since $\varphi$ is $L_q$-almost decreasing, 
\[\varphi(x, |\nabla v|/(1-t)) \leq \frac{L_q}{(1-t)^q} \varphi(x, |\nabla v|) \leq 2^q L_q \varphi(x, |\nabla v|)\]
for $0< t \leq 1/2$. Since the right-hand side is integrable by assumption, we can pass to the limit $t \to 0^+$ in \eqref{approx_limsup_est} using dominated convergence, obtaining \begin{equation*} 
\limsup_{k \to \infty} \int_\Omega \varphi(x, |\nabla v^k|) \, \dd x \leq  \int_\Omega\varphi(x, |\nabla v|) \, \dd x = \cE_\varphi(v). 
\end{equation*}   
As on the other hand $\cE_\varphi$ is lower semicontinuous, this concludes the proof.
\end{proof}

\subsection{Abstract characterisation}

Our next aim is to provide an abstract characterisation of the subdifferential of the functional $\cE_\varphi$, and through it, a description of the subdifferential of the functional $\ROF_\varphi$ (a local characterisation is provided in the next section). To this end, we first recall the definition of a convex conjugate. It is enough for our purposes to restrict to the Hilbertian setting; indeed, we will only apply it in the case $X= L^2(\Omega)$. For more details we refer to \cite{EkelandTemam}. 

\begin{defn}
Given a Hilbert space $X$ with inner product $(\cdot, \cdot)_X$ and a convex function $F\colon X \rightarrow \mathbb{R} \cup \{ \infty \}$, we define its {\it Legendre--Fenchel transform}\index{Legendre-Fenchel transform} $F^*\colon X^* \rightarrow \mathbb{R} \cup \{ \infty \}$ by the formula
\begin{equation*}
F^*(v) = \sup_{u \in X}\  ( v,u )_X  - F(u) .
\end{equation*}
\end{defn}

In order to characterise the subdifferential of~$\cE_\varphi$, we will use the following well-known result called the Fenchel extremality condition.


\begin{lem} \label{lem:fenchel} 
Suppose that $\cE\colon X \to [0, \infty]$ is convex and lower semicontinuous. Then, 
	\[ w \in \partial \cE(v) \ \Leftrightarrow \ (v,w)_X = \cE(v) + \cE^*(w). \] 
\end{lem}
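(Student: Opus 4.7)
The plan is to unpack both sides of the equivalence directly from the definitions of the subdifferential and the Legendre--Fenchel transform; neither convexity nor lower semicontinuity of $\cE$ will actually be used, since we are not invoking biduality $\cE^{**}=\cE$. The only background fact I need is the Fenchel--Young inequality $(v,w)_X \leq \cE(v) + \cE^*(w)$ for all $v,w \in X$, which is immediate from the definition $\cE^*(w) = \sup_{u\in X}\,(w,u)_X - \cE(u)$. Consequently, the claimed equality $(v,w)_X = \cE(v)+\cE^*(w)$ is equivalent to the reverse inequality $(v,w)_X \geq \cE(v)+\cE^*(w)$, and the proof reduces to showing that this reverse inequality is equivalent to $w \in \partial\cE(v)$.

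For the direction $(\Rightarrow)$, I would assume $w \in \partial\cE(v)$, i.e.\ $\cE(u) \geq \cE(v) + (w, u-v)_X$ for every $u \in X$. Rearranging gives $(w,u)_X - \cE(u) \leq (w,v)_X - \cE(v)$ for every $u$, and taking the supremum over $u \in X$ in the left-hand side yields $\cE^*(w) \leq (v,w)_X - \cE(v)$. Combined with Fenchel--Young this forces equality. (Implicit here is that $\cE(v) < \infty$, since otherwise $\partial\cE(v)$ is empty by convention and the implication is vacuous; this edge case is worth a one-line remark.)

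For the direction $(\Leftarrow)$, I would assume $(v,w)_X = \cE(v) + \cE^*(w)$. For any $u \in X$, the definition of $\cE^*$ gives $\cE^*(w) \geq (w,u)_X - \cE(u)$; substituting the assumed equality yields $(v,w)_X - \cE(v) \geq (w,u)_X - \cE(u)$, which rearranges to $\cE(u) \geq \cE(v) + (w, u-v)_X$. Since $u$ was arbitrary, this is precisely the defining property of $w \in \partial\cE(v)$. There is no genuine obstacle in either direction: the entire argument is a one-line rearrangement on each side, with the Fenchel--Young inequality playing the only nontrivial role.
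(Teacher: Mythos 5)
Your proof is correct and is exactly the standard Fenchel--Young argument; the paper states this lemma as a well-known fact without proof, so there is nothing different to compare against. Your observation that convexity and lower semicontinuity are not actually needed for this particular equivalence (only for the symmetric statement involving $\partial\cE^*$) is accurate, and your handling of the edge case $\cE(v)=\infty$ is appropriate.
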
 

To apply the above result, it remains to identify $\cE_\varphi^*$. For this purpose, we introduce the following functionals on $L^2(\Omega)$:
\[\widetilde{\cF}(w) := \inf \left\{ \int_\Omega \varphi^*(x, |\xi|) \,\dd x \ \bigg| \ \xi \in C_c^1(\Omega)^n, \ \Div \xi = w \right\} \]
and
\[\cF(w) := \inf \left\{ \int_\Omega \varphi^*(x, |\xi|) \,\dd x\ \bigg|\ \xi \in L^1(\Omega)^n, \ \Div \xi = w, \ \nu\cdot\xi = 0 \right\}.\]
Clearly, $\cF \leq \widetilde{\cF}$, since the class of admissible vector fields is larger in the definition of $\cF$. Furthermore, we observe that $\cE_\varphi = \widetilde{\cF}^*$: 
\begin{align*} 
\widetilde{\cF}^*(v) &= \sup_{w \in L^2(\Omega)} \bigg[ \int_\Omega v\, w \, \dd x - \inf_{\substack{\xi \in C^1_c(\Omega)^n \\ \Div \xi = w}} \bigg[ \int_\Omega \varphi^*(x, |\xi|) \, \dd x \bigg] \bigg] \\
&= \sup_{w \in L^2(\Omega)} \sup_{\substack{\xi \in C^1_c(\Omega)^n \\ \Div \xi = w}} \bigg[ \int_\Omega v \, w \, \dd x - \int_\Omega \varphi^*(x, |\xi|) \, \dd x \bigg] \\ 
&= \sup_{\xi \in C^1_c(\Omega)^n} \bigg[ \int_\Omega v \Div \xi \, \dd x - \int_\Omega \varphi^*(x, |\xi|) \, \dd x \bigg] = \cE_\varphi(v).
\end{align*}
Our next goal is to show that $\cE_\varphi^* = \mathcal{F}$; for this, we first need the following result.



\begin{lem}\label{lem:F_prop} 
Suppose that $\varphi \in \Phi_c$ satisfies \ref{A0} and \ref{aDec}.
The functional $\cF$ is convex and lower semicontinuous on $L^2(\Omega)$. Moreover, whenever $\cF(w) < \infty$, the infimum in the definition of $\cF(w)$ is attained.  
\end{lem}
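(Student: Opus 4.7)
The convexity of $\cF$ is the easy part. Given $w_1, w_2 \in L^2(\Omega)$ with $\cF(w_i) < \infty$ and $t \in [0,1]$, fix $\varepsilon>0$ and pick near-infimizers $\xi_i \in L^1(\Omega)^n$ admissible for $w_i$ with $\int_\Omega \varphi^*(x,|\xi_i|) \dd x \leq \cF(w_i) + \varepsilon$. Linearity of divergence and of the normal trace makes $t\xi_1 + (1-t)\xi_2$ admissible for $tw_1+(1-t)w_2$. By \eqref{eq:phistarisconvex} we have $\varphi^* \in \Phi_c(\Omega)$, so $\varphi^*(x,\cdot)$ is non-decreasing and convex; combining these two with the triangle inequality gives
\[ \int_\Omega \varphi^*(x, |t\xi_1 + (1-t)\xi_2|) \dd x \leq t \int_\Omega \varphi^*(x, |\xi_1|) \dd x + (1-t) \int_\Omega \varphi^*(x, |\xi_2|) \dd x. \]
Taking the infimum over $\xi_1, \xi_2$ and sending $\varepsilon \to 0^+$ gives convexity of $\cF$.

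\textbf{Lower semicontinuity and attainment.} Both will be proved simultaneously by the direct method. Suppose $w_k \to w$ in $L^2(\Omega)$ with $\liminf_k \cF(w_k) < \infty$ (for attainment, take $w_k = w$ and a minimising sequence). Choose admissible $\xi_k$ with $\int_\Omega \varphi^*(x,|\xi_k|)\,\dd x \leq \cF(w_k) + 1/k$. Since $\varphi$ satisfies \ref{aDec}, the conjugate $\varphi^*$ satisfies $(\text{aInc})_p$ for some $p>1$, and since $\varphi$ satisfies \ref{A0} so does $\varphi^*$. Then \eqref{eq:inclusionforinc2} applied to $\varphi^*$ yields a uniform $L^p(\Omega)^n$-bound
\[ c\int_\Omega |\xi_k|^p \,\dd x \leq \int_\Omega \varphi^*(x,|\xi_k|) \,\dd x + \mathcal{L}^n(\Omega) \leq \cF(w_k) + 1/k + \mathcal{L}^n(\Omega). \]
By reflexivity of $L^p$ for $p>1$, we extract a subsequence (not relabeled) with $\xi_k \rightharpoonup \xi$ in $L^p(\Omega)^n$. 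Testing against $C_c^1(\Omega)$ functions shows $\Div \xi = w$ in distributions, and since $w \in L^2(\Omega)$ we have $\xi \in X_{p,p}$. Finally, the lower semicontinuity of $\varphi^*$ (coming from \eqref{eq:leftcontinuitylowersemicontinuity}) together with Fatou's lemma shows that $\xi \mapsto \int_\Omega \varphi^*(x,|\xi|)\,\dd x$ is strongly lower semicontinuous on $L^p$, and by convexity it is weakly lower semicontinuous, hence
\[ \int_\Omega \varphi^*(x,|\xi|)\,\dd x \leq \liminf_{k\to\infty} \int_\Omega \varphi^*(x,|\xi_k|) \,\dd x \leq \liminf_{k\to\infty} \cF(w_k). \]

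\textbf{Handling the normal trace.} The delicate point, and the main obstacle, is justifying that the boundary condition $\nu \cdot \xi = 0$ passes to the weak limit, since $\xi_k$ is only given in $L^1(\Omega)^n$ and the normal trace is not defined on that space in general. The remedy is precisely the $L^p$-bound above: it upgrades $\xi_k$ to elements of $X_{p,p}$, on which the normal trace operator $\xi \mapsto \nu\cdot\xi$ from \cite[Lemma 3.10]{NovotnyStravskraba} is bounded and linear. Since $\xi_k \rightharpoonup \xi$ in $L^p(\Omega)^n$ and $\Div\xi_k = w_k \to w = \Div\xi$ strongly in $L^2(\Omega) \hookrightarrow L^p(\Omega)$, the sequence converges weakly in the graph norm of $X_{p,p}$. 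The continuity of the normal trace operator then ensures $\nu \cdot \xi_k \rightharpoonup \nu \cdot \xi = 0$ in $W^{1-1/q,q}(\partial\Omega)^*$. Thus $\xi$ is admissible for $w$, which together with the liminf inequality above yields both $\cF(w) \leq \liminf_k \cF(w_k)$ and, in the minimising-sequence case, $\cF(w) = \int_\Omega \varphi^*(x,|\xi|)\,\dd x$.
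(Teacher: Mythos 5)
Your proposal is correct and follows essentially the same route as the paper: the direct method with the uniform $L^p$-bound from \eqref{eq:inclusionforinc2} applied to $\varphi^*$, weak compactness in $L^p$, weak lower semicontinuity of the convex integral functional, and passage of the constraints $\Div\xi = w$ and $\nu\cdot\xi = 0$ to the weak limit via continuity of the normal trace on $X_{p,p}$. Your treatment is in fact slightly more explicit than the paper's on two points the paper glosses over (the convexity of $\cF$, which the paper calls ``clear,'' and the weak convergence of the normal traces), but the underlying argument is identical.
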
 


\begin{proof} 
{\bf Step 1.}  First, we show that if $\mathcal{F}(w) < \infty$, it follows from the direct method of the calculus of variations that the infimum is attained. By definition, we have that
\begin{equation*}
m:=\inf \bigg\{ \int_{\Omega} \varphi^{*}(x,|\xi |) \, \dd x \ \bigg| \ \xi \in C_{w} \bigg\} < \infty , 
\end{equation*}
where 
\begin{equation*}
C_{w}=\bigg\{ \displaystyle \xi \in X_{\varphi^*,2}\ \bigg| \ \mathrm{div} \, \xi = w, \ \nu \cdot \xi =0 \bigg\}.
\end{equation*}
The set $C_{w}$ is convex: $X_{\varphi^*, 2}$ is a convex set and both constraints in the definition are linear. Moreover, since the functional $\mathcal{F}$ is by definition bounded from below (by zero), there exists a sequence $\xi^{k}\in C_{w}$ such that 
\begin{equation*}
\int_{\Omega} \varphi^{*}(x,|\xi^{k}| ) \, \dd x \to m.
\end{equation*}
In particular, the sequence $\left\{\int_{\Omega} \varphi^{*}(x,|\xi^{k}| ) \, \dd x \right\}$ is bounded, i.e., there exists $C>0$ such that 
$$\int_{\Omega} \varphi^{*}(x,|\xi^{k}| ) \, \dd x \le C $$ 
for all $k \in \mathbb{N}$. Since $\varphi$ satisfies \ref{A0} and \ref{aDec}, we have that $\varphi^*$ satisfies \ref{A0} and \ref{aInc}, and consequently by the estimate \eqref{eq:inclusionforinc2} applied for $\varphi^*$ we get
\begin{equation*}
\|\xi^{k} \|_{L^p(\Omega)^n} \le C.
\end{equation*}
Therefore, by the Banach--Alaoglu theorem there exists $\xi^{*} \in L^{p}(\Omega )^{n}$ such that, up to taking a (not relabeled) subsequence, we have
\begin{equation*}
\xi^{k} \rightharpoonup \xi^{*} \text{ in } L^{p}(\Omega )^{n}.
\end{equation*}
By property \eqref{eq:phistarisconvex}, the function $\xi \mapsto \varphi^{*}(x,|\xi|)$ is left-continuous for a.e.\ $x\in \Omega$, and hence \cite[Lemma 2.1.5]{HarjulehtoHasto} implies that it is lower semicontinuous.

It is also convex and nonnegative, thus \cite[Theorem 3.20]{Dacorogna} yields
\begin{equation*}
\liminf_{k \rightarrow \infty} \int_{\Omega} \varphi^{*}(x,|\xi^{k}|) \, \dd x \ge \int_{\Omega} \varphi^{*}(x,|\xi^{*}|) \, \dd x. 
\end{equation*}
Hence,
\begin{align*}
\inf \left\{ \int_{\Omega} \varphi^{*}(x,|\xi |) \, \dd x : \, \xi \in C_{w} \right\} &= \liminf_{k \rightarrow \infty} \int_{\Omega}\varphi^{*}(x,|\xi^{k}|) \, \dd x \\ 
&\ge \int_{\Omega}\varphi^{*}(x,|\xi^{*}|) \, \dd x 
\ge \inf \left\{ \int_{\Omega} \varphi^{*}(x,|\xi |) \, \dd x : \, \xi \in C_{w} \right\},
\end{align*}
so all the inequalities above are in fact equalities, which means that
$$ \mathcal{F}(w) = \int_{\Omega} \varphi^{*}(x,|\xi^{*} |) \, \dd x. $$


{\bf \flushleft Step 2.} { The convexity of the functional $\mathcal{F}$ is clear; it remains to show that $\mathcal{F}$ is lower semicontinuous in $L^{2}(\Omega)$.} To this end, let $w^k \to w$ in $L^{2}(\Omega)$. We may assume that
$$ { m := \liminf_{k \rightarrow \infty} \mathcal{F}(w^{k}) < +\infty},$$
otherwise there is nothing to prove. Up to taking a subsequence (still denoted by $w^k$), we may require that
$$ \lim_{k \rightarrow \infty} \mathcal{F}(w^{k}) = m$$
and that
$$ \mathcal{F}(w^{k}) \leq C$$
for all $k \in \mathbb{N}$ and some $C > 0$. In particular, since the infimum in the definition of $\mathcal{F}$ is attained, there exist $\xi^{k}\in C_{w^{k}}$ such that
$$ \mathcal{F}(w^k) = \int_{\Omega} \varphi^{*}(x,|\xi^{k}|) \, \dd x.$$
Consequently, for all $k \in \mathbb{N}$
$$ \int_{\Omega} \varphi^{*}(x,|\xi^{k}|) \, \dd x \leq C.$$
Arguing as in Step 1, we see that there exists $\xi^{*} \in L^{p}(\Omega)^{n}$ such that, possibly passing to a subsequence (still denoted by $\xi^k$),  
$$ \xi^{k}\rightharpoonup \xi^{*} \text{ in } L^{p}(\Omega )^{n} $$
and
\begin{equation*}
\liminf_{k \rightarrow \infty} \int_{\Omega} \varphi^{*}(x,|\xi^{k}|) \, \dd x \geq \int_{\Omega} \varphi^{*}(x,|\xi^{*}|) \, \dd x. 
\end{equation*}
Since $\xi^{k} \in C_{w^{k}}$, we have that $\mathrm{div} \, \xi^{k} = w^{k}$; from the convergence $w^{k} \rightarrow w$ in $L^2(\Omega)$ and $\xi^{k} \rightharpoonup \xi^{*}$ in $L^{p}(\Omega )^{n}$ we deduce that $\mathrm{div} \, \xi^{*} = w$. By the continuity of the operator $\xi \mapsto \nu \cdot \xi$, we get that $\nu \cdot \xi^{*} =0$ and hence $\xi^{*} \in C_{w}$. Since the values of $\mathcal{F}(w^{k})$ are finite, the infimum in the definition of $\mathcal{F}$ is attained, therefore
$$
m = \lim_{k \rightarrow \infty} \mathcal{F}(w^{k}) = \liminf_{k \rightarrow \infty} \mathcal{F}(w^{k}) = \liminf_{k \rightarrow \infty} \int_{\Omega}\varphi^{*}(x,|\xi^{k}|) \, \dd x \geq \int_{\Omega}\varphi^{*}(x,|\xi^{*}|) \, \dd x = \mathcal{F}(w),$$ 
so at the level of the original sequence we have that
$$ m = \liminf_{k \rightarrow \infty} \mathcal{F}(w^{k}) \geq \mathcal{F}(w), $$
which means that $\mathcal{F}$ is lower semicontinuous.
\end{proof}

\begin{lem} \label{lem:dual} 
Suppose that $\varphi \in \Phi_c\cap C(\Omega \times [0, \infty))$ satisfies \ref{A0}, \ref{RVA1} and \ref{aDec}, and $\varphi^*$ satisfies \ref{VA1}. Then $\cF = \cE_\varphi^*$. 
\end{lem}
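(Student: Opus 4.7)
My plan is to prove the two inequalities $\cE_\varphi^* \leq \cF$ and $\cF \leq \cE_\varphi^*$ separately. The first (the ``concrete'' direction) will rely on the pointwise Young inequality $ts \leq \varphi(x,t) + \varphi^*(x,s)$ combined with the Gauss--Green formula \eqref{Green} for vector fields in $X_{\varphi^*,2}$ and the smooth up-to-the-boundary approximation supplied by Lemma~\ref{lem:approx_improv}. The second (the ``abstract'' direction) will exploit the identity $\cE_\varphi = \widetilde{\cF}^*$, already established in the computation preceding Lemma~\ref{lem:F_prop}, together with the Fenchel--Moreau theorem and the regularity properties of $\cF$ given by Lemma~\ref{lem:F_prop}.

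\textbf{Direction $\cE_\varphi^* \leq \cF$.} Fix $w \in L^2(\Omega)$; I may assume $\cF(w) < \infty$, otherwise the inequality is trivial. Pick any admissible $\xi \in L^1(\Omega)^n$ with $\Div \xi = w$, $\nu\cdot\xi = 0$, and $\int_\Omega \varphi^*(x,|\xi|) \, \dd x < \infty$; as noted in the discussion just before the lemma, such $\xi$ automatically belongs to $X_{\varphi^*,2} \subset X_{p,p}$ for some $p \in (1,2]$. For $v \in D_\varphi$, let $v^k \in C^\infty(\overline{\Omega})$ be the sequence furnished by Lemma~\ref{lem:approx_improv}, so that $v^k \to v$ in $L^2(\Omega)$ and $\int_\Omega \varphi(x,|\nabla v^k|) \, \dd x \to \cE_\varphi(v)$. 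Since $v^k \in W^{1,q}(\Omega)$ and $\nu\cdot\xi = 0$, the Gauss--Green formula \eqref{Green} yields $\int_\Omega v^k w \, \dd x = -\int_\Omega \nabla v^k \cdot \xi \, \dd x$. Applying the pointwise Young inequality gives
\[ \int_\Omega v^k w \, \dd x \leq \int_\Omega \varphi(x,|\nabla v^k|) \, \dd x + \int_\Omega \varphi^*(x,|\xi|) \, \dd x, \]
and passing to the limit $k \to \infty$ yields $\int_\Omega v w \, \dd x - \cE_\varphi(v) \leq \int_\Omega \varphi^*(x,|\xi|) \, \dd x$. For $v \notin D_\varphi$ the same bound is trivial since $\cE_\varphi(v) = \infty$. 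Taking the supremum over $v \in L^2(\Omega)$ and then the infimum over admissible $\xi$ gives $\cE_\varphi^*(w) \leq \cF(w)$.

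\textbf{Direction $\cF \leq \cE_\varphi^*$.} The computation preceding Lemma~\ref{lem:F_prop} shows that $\widetilde{\cF}^* = \cE_\varphi$. Since enlarging the class of admissible vector fields can only decrease the infimum, one has $\cF \leq \widetilde{\cF}$, hence $\widetilde{\cF}^* \leq \cF^*$, i.e., $\cE_\varphi \leq \cF^*$. Taking conjugates once more reverses the inequality, giving $\cF^{**} \leq \cE_\varphi^*$. By Lemma~\ref{lem:F_prop}, $\cF$ is convex and lower semicontinuous on $L^2(\Omega)$, so the Fenchel--Moreau theorem yields $\cF^{**} = \cF$, and the proof is complete.

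\textbf{Main obstacle.} The abstract direction is essentially a formal manipulation once Lemma~\ref{lem:F_prop} is in hand; the substance of the argument is in the first direction, and the key subtlety is the justification of the limiting equality $\int_\Omega v^k w \, \dd x = -\int_\Omega \nabla v^k \cdot \xi \, \dd x$. This is exactly what forced us to prove Lemma~\ref{lem:approx_improv}: without smoothness of $v^k$ up to $\partial \Omega$, the boundary term in \eqref{Green} would not be controlled, and one could not absorb $\nu\cdot\xi = 0$ to eliminate it. The condition $\int \varphi(x,|\nabla v^k|) \, \dd x \to \cE_\varphi(v)$ is then what allows the Young inequality to pass to the limit and recover the sharp bound $\cE_\varphi(v)$ on the right-hand side, as opposed to an inequality with slack.
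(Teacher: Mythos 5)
Your proof is correct and follows essentially the same route as the paper's: the inequality $\cE_\varphi^*\leq\cF$ via Young's inequality, the Gauss--Green formula \eqref{Green} and the up-to-the-boundary approximation of Lemma~\ref{lem:approx_improv}, and the reverse inequality via $\cE_\varphi=\widetilde{\cF}^*\leq\cF^*$ together with Fenchel--Moreau applied to $\cF$ using Lemma~\ref{lem:F_prop}. The only difference is cosmetic: the paper phrases the argument as proving $\cE_\varphi=\cF^*$ and leaves the final biconjugation step implicit, whereas you spell it out explicitly.
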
 

\begin{proof} 
Since $\widetilde{\cF} \geq \cF$, we have $\cE_\varphi = \widetilde{\cF}^* \leq \cF^*$. On the other hand, for $v \in D_\varphi \cap C^\infty({\overline{\Omega}})$ and $\xi \in X_{\varphi^*,2}$ with $\nu \cdot \xi = 0$,  
\[ \int_\Omega v \Div \xi \, \dd x = - \int_\Omega\xi \cdot \nabla v \, \dd x \leq \int_\Omega \varphi(|\nabla v|) \, \dd x +  \int_\Omega \varphi^*(x, |\xi|) \, \dd x. \]
Thus, by Lemma \ref{lem:approx_improv}, we deduce for any $v \in D_\varphi$ 
\begin{equation}\label{eq:inequalityforvxi}
\int_\Omega v \Div \xi \, \dd x  \leq \varphi(|D v|)(\Omega) +  \int_\Omega \varphi^*(x,|\xi|) \, \dd x.
\end{equation}
Taking infimum over all $\xi \in  X_{\varphi^*,2}$ with $\nu \cdot \xi = 0$ such that $\Div \xi = w$ for a given $w \in L^2(\Omega)$, we obtain
\[(v,w)_{L^2(\Omega)} \leq \cE_\varphi(v) + \cF(w),\]
whence $\cE_\varphi \geq \cF^*$.  
\end{proof} 



As a consequence, we get the following characterisation of the subdifferential of $\cE_\varphi$.

\begin{thm}\label{thm:abstractcharacterisation}
Assume that $\varphi \in \Phi_c\cap C(\Omega \times [0, \infty))$ satisfies \ref{A0}, \ref{aDec} and \ref{RVA1}, and that $\varphi^*$ satisfies \ref{VA1}. Let $v \in L^2(\Omega)$.  
Then, the following conditions are equivalent:
\begin{itemize}
\item[(a)] $w \in \partial \cE_\varphi(v)$;

\item[(b)] $v \in BV(\Omega)$ and there exists $\xi \in X_{\varphi^*,2} \subset L^1(\Omega)^n$ with $\mathrm{div} \, \xi = w$ and $\xi \cdot \nu = 0$ such that
\begin{equation}\label{eq:equalityincharacterisation}
\int_\Omega v w \, \dd x = \int_\Omega \varphi(|Dv|) + \int_\Omega \varphi^*(x,|\xi|) \, \dd x.
\end{equation}
\end{itemize}
\end{thm}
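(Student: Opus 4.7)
The plan is to reduce the statement to the Fenchel extremality condition in Lemma~\ref{lem:fenchel}, combining it with the dual identification $\cE_\varphi^* = \cF$ from Lemma~\ref{lem:dual} and the attainment of the infimum defining $\cF$ from Lemma~\ref{lem:F_prop}. Under the hypotheses $\cE_\varphi$ is convex and lower semicontinuous on $L^2(\Omega)$, so Lemma~\ref{lem:fenchel} yields that $w \in \partial \cE_\varphi(v)$ if and only if
$$ \int_\Omega v w \, \dd x = \cE_\varphi(v) + \cF(w). $$
The remaining task is to translate this abstract equality into the concrete form~\eqref{eq:equalityincharacterisation}.

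For the implication (b)$\Rightarrow$(a), I would argue as follows. Any $\xi \in X_{\varphi^*,2}$ with $\Div \xi = w$ and $\nu \cdot \xi = 0$ is admissible in the infimum defining $\cF(w)$, so $\cF(w) \leq \int_\Omega \varphi^*(x, |\xi|) \, \dd x$. Combining the assumed equality~\eqref{eq:equalityincharacterisation} --- in which $\int_\Omega \varphi(|Dv|) = \cE_\varphi(v)$ by~\eqref{TV_explicit} --- with the Fenchel--Young inequality $\int_\Omega v w \, \dd x \leq \cE_\varphi(v) + \cF(w)$ stemming from Lemma~\ref{lem:dual}, all inequalities must collapse into equalities. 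This yields the extremality condition and hence $w \in \partial \cE_\varphi(v)$.

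For (a)$\Rightarrow$(b), I would first invoke Lemmas~\ref{lem:fenchel} and~\ref{lem:dual} to obtain the extremality equality, forcing $\cE_\varphi(v), \cF(w) < \infty$. Finiteness of $\cE_\varphi(v)$ places $v \in D_\varphi$; I then need to confirm $v \in BV(\Omega)$, which follows by testing the defining supremum of $\TV_\varphi(v)$ against vector fields $\beta \eta$ with $\eta \in C_c^1(\Omega)^n$, $|\eta| \leq 1$, using that $\varphi^*$ inherits~\ref{A0} from $\varphi$ so that $\varphi^*(x, \beta) \leq 1$ for some $\beta \in (0,1]$. Once $v \in BV(\Omega)$,~\eqref{TV_explicit} recasts $\cE_\varphi(v) = \int_\Omega \varphi(|Dv|)$. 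Finiteness of $\cF(w)$ then allows me to apply Lemma~\ref{lem:F_prop} to produce the optimal $\xi \in X_{\varphi^*,2}$ with $\Div \xi = w$, $\nu \cdot \xi = 0$, and $\cF(w) = \int_\Omega \varphi^*(x, |\xi|) \, \dd x$. Inserting both expressions into the extremality equality delivers~\eqref{eq:equalityincharacterisation}.

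The proof is essentially an assembly of previously established results, so no substantially new estimate should be required. The only non-bookkeeping step is the verification $D_\varphi \subset BV(\Omega)$, which I expect to be the main (and quite minor) obstacle.
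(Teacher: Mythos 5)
Your proposal is correct and follows essentially the same route as the paper: both directions are reduced to the Fenchel extremality condition of Lemma~\ref{lem:fenchel} via the identification $\cE_\varphi^*=\cF$ from Lemma~\ref{lem:dual}, with the attainment of the infimum in Lemma~\ref{lem:F_prop} supplying the optimal $\xi$ and the representation \eqref{TV_explicit} converting $\cE_\varphi(v)$ into $\int_\Omega\varphi(|Dv|)$. Your explicit verification that $D_\varphi\subset BV(\Omega)$ by testing with $\beta\eta$, $|\eta|\le 1$, is a small but welcome addition that the paper leaves implicit (it is recorded only in the remark following Proposition~\ref{prop:poincare}).
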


\begin{proof}
We first show the implication $(a) \Rightarrow (b)$. By Lemma \ref{lem:fenchel}, we have
\begin{equation}\label{eq:equalityforvw1}
\int_\Omega v w \, \dd x = \cE_\varphi(v) + \mathcal{F}(w),
\end{equation}
which under the current assumptions reduces to
\begin{equation*}
\int_\Omega v w \, \dd x = \int_\Omega \varphi(|Dv|) + \inf \left\{ \int_\Omega \varphi^*(x, |\xi|) \,\dd x\ \bigg|\ \xi \in L^1(\Omega)^n, \ \Div \xi = w, \ \nu\cdot\xi = 0 \right\}.
\end{equation*}
Note that if $w \in \partial \cE_\varphi(v)$, by definition $w$ lies in $L^2(\Omega)$ and therefore the left-hand side in \eqref{eq:equalityforvw1} is finite; therefore, the right-hand side is also finite. In particular, $v \in D_\varphi$. Moreover, whenever $\mathcal{F}(w)$ is finite, by Lemma \ref{lem:F_prop} there exists $\xi \in L^1(\Omega)^n$ for which the infimum is attained. We fix such a $\xi$. In particular, $\mathrm{div} \, \xi = w$, $\xi \cdot \nu = 0$ and therefore
\begin{equation}
\int_\Omega v w \, \dd x = \int_\Omega \varphi(|Dv|) + \int_\Omega \varphi^*(x, |\xi|) \,\dd x.
\end{equation}
This concludes the proof of the first implication.

To see the implication $(b) \Rightarrow (a)$, we first observe that $v \in D_\varphi$ and $\mathcal{F}(w) < \infty$; otherwise the equality \eqref{eq:equalityincharacterisation} would be violated. Now, take any $\xi' \in L^1(\Omega)^n$ which is admissible in the definition of $\mathcal{F}$. Then, arguing as in Lemma \ref{lem:dual}, we arrive at the inequality \eqref{eq:inequalityforvxi}, i.e.,
\begin{equation}
\int_\Omega v w \, \dd x = \int_\Omega v \Div \xi \, \dd x  \leq \varphi(|D v|)(\Omega) +  \int_\Omega \varphi^*(x,|\xi'|) \, \dd x.
\end{equation}
Thus, comparing this to equality \eqref{eq:equalityincharacterisation}, we see that
\begin{equation}
\int_\Omega \varphi^*(x,|\xi|) \, \dd x \leq \int_\Omega \varphi^*(x,|\xi'|) \, \dd x
\end{equation}
and thus the minimum in the definition of $\mathcal{F}$ is achieved at $\xi$. Hence,
\begin{equation*}
\int_\Omega v w \, \dd x = \cE_\varphi(v) + \mathcal{F}(w),
\end{equation*}
and Lemma \ref{lem:fenchel} implies that $w \in \partial \cE_\varphi(v)$.
\end{proof}

\subsection{Local characterisation}

In this section, we move towards the main goal of this paper and provide a local characterisation of the subdifferential of the functional $\cE_\varphi$. More precisely, we describe the subdifferential $\partial \cE_\varphi(v)$ in terms of existence of a calibrating vector field $\xi$ satisfying suitable compatibility conditions with the function $v$. To this end, we construct Anzellotti-type pairings (see~\cite{Anzellotti}), which will allow us to give an almost pointwise characterisation of the subdifferential in Theorem \ref{thm:localcharacterisation}.

 
\begin{thm} 
Suppose that $\varphi \in \Phi_c\cap C(\Omega \times [0, \infty))$ satisfies \ref{A0}, \ref{RVA1} and \ref{aDec}, and $\varphi^*$ satisfies \ref{VA1}. Let $v \in D_\varphi$ and $\xi \in X_{\varphi^*, 2}$. Then, there exists a Radon measure $(\xi, Dv)$ on $\Omega$ uniquely defined by \begin{equation} \label{anzellotti_def1} 
\langle(\xi, Dv), \psi\rangle_{C_0(\Omega)^*} := - \int_\Omega \psi\, v\, \Div \xi \, \dd x  - \int_\Omega v\, \xi \cdot \nabla \psi\, \dd x   
\end{equation}   
for any test function $\psi \in C^1_c(\Omega)$. Moreover, 
\begin{equation} \label{eq:anzellotti_young} 
|(\xi, Dv)| \leq \varphi( |Dv|) + \varphi^*(\cdot,|\xi| ) \, \cL^n \quad \text{as measures,}
\end{equation} 
i.e., this inequality holds for any Borel set.
\end{thm}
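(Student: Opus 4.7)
The plan is to construct $(\xi,Dv)$ as a linear functional on $C_c^1(\Omega)$ via \eqref{anzellotti_def1}, bound it through smooth approximation and Young's inequality, and then extract a Radon measure via Riesz representation.

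\textbf{Setup and smooth approximation.} For $\psi\in C_c^1(\Omega)$ the right-hand side of \eqref{anzellotti_def1} is finite: $\psi v\Div\xi\in L^1(\Omega)$ since $v,\Div\xi\in L^2(\Omega)$, while $v\,\xi\cdot\nabla\psi\in L^1(\Omega)$ by Young's inequality together with $v\in L^{\varphi}(\Omega)$ from Proposition~\ref{prop:poincare} and finiteness of the $\varphi^*$-modular of $\xi$. I then pick $v^k\in C^\infty(\overline\Omega)$ from Lemma~\ref{lem:approx_improv}, so that $v^k\to v$ in $L^2(\Omega)\cap L^{\varphi}(\Omega)$ and $\cE_\varphi(v^k)\to\cE_\varphi(v)$. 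As $\psi v^k\in W^{1,q}(\Omega)$ vanishes on $\partial\Omega$, the Gauss--Green formula \eqref{Green} applied to $\xi\in X_{\varphi^*,2}\subset X_{p,p}$ gives
\begin{equation*}
-\int_\Omega \psi v^k\Div\xi\,\dd x -\int_\Omega v^k\,\xi\cdot\nabla\psi\,\dd x = \int_\Omega \psi\,\xi\cdot\nabla v^k\,\dd x,
\end{equation*}
and Young's inequality for the conjugate pair $(\varphi,\varphi^*)$ yields
\begin{equation*}
\Bigl|\int_\Omega \psi\,\xi\cdot\nabla v^k\,\dd x\Bigr| \le \int_\Omega |\psi|\,\varphi(x,|\nabla v^k|)\,\dd x + \int_\Omega |\psi|\,\varphi^*(x,|\xi|)\,\dd x.
\end{equation*}

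\textbf{Passage to the limit.} On the left, $v^k\to v$ in $L^2$ together with $\Div\xi\in L^2$ handles the first integral, while $v^k\to v$ in $L^{\varphi}$, H\"older's inequality in Musielak--Orlicz spaces, and $\xi\cdot\nabla\psi\in L^{\varphi^*}(\Omega)$ (as $|\nabla\psi|$ is bounded) handle the second; thus the left-hand side converges to the value of \eqref{anzellotti_def1} at $\psi$. On the right, the $\varphi^*$-term is fixed, so the decisive point is to show $\int|\psi|\varphi(x,|\nabla v^k|)\,\dd x\to\int|\psi|\,\dd\varphi(|Dv|)$. Set $\mu_k:=\varphi(\cdot,|\nabla v^k|)\mathcal{L}^n$ and $\mu:=\varphi(|Dv|)$. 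Lemma~\ref{lem:approx_improv} yields $\mu_k(\Omega)\to\mu(\Omega)$; lower semicontinuity of $\TV_\varphi$ restricted to any open $U\subset\Omega$ (applicable since $v^k\to v$ in $L^{\varphi}(\Omega)\hookrightarrow L^1(\Omega)$ by \eqref{eq:inclusionforinc} with $p=1$) gives $\liminf_k\mu_k(U)\ge\mu(U)$. These two conditions are exactly the Portmanteau criterion for narrow convergence of positive measures, so $\mu_k\rightharpoonup\mu$ narrowly and $\int|\psi|\,\dd\mu_k\to\int|\psi|\,\dd\mu$ since $|\psi|\in C_b(\Omega)$.

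\textbf{Conclusion and measure inequality.} Combining these ingredients, for every $\psi\in C_c^1(\Omega)$,
\begin{equation*}
\bigl|\langle(\xi,Dv),\psi\rangle\bigr| \le \int_\Omega |\psi|\,\dd\varphi(|Dv|) + \int_\Omega |\psi|\,\varphi^*(x,|\xi|)\,\dd x.
\end{equation*}
By density of $C_c^1(\Omega)$ in $C_0(\Omega)$, the functional extends continuously; Riesz representation produces the unique signed Radon measure $(\xi,Dv)$. Taking the supremum over $\psi\in C_c(\Omega)$ with $|\psi|\le\chi_U$ for open $U$ gives $|(\xi,Dv)|(U)\le\varphi(|Dv|)(U)+\int_U\varphi^*(x,|\xi|)\,\dd x$, and outer regularity extends the inequality to arbitrary Borel sets, which is \eqref{eq:anzellotti_young}. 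The main obstacle is this narrow convergence step: crude bounds via global modular convergence from Lemma~\ref{lem:approx_improv} only yield $\|\psi\|_\infty(\cE_\varphi(v)+\int\varphi^*(x,|\xi|)\,\dd x)$, which is too weak for a pointwise measure majorisation---we genuinely need to carry the weight $|\psi|$ through the limit, and Portmanteau coupled with local lower semicontinuity plays here the role that Reshetnyak's continuity theorem plays in standard non-autonomous linear-growth settings, without requiring continuity of $\varphi'_\infty$.
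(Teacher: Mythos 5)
Your proof is correct and follows essentially the same route as the paper: approximate $v$ by $C^\infty(\overline\Omega)$ functions via Lemma \ref{lem:approx_improv}, integrate by parts and apply Young's inequality with the weight $|\psi|$, and pass to the limit using convergence of the total masses $\mu_k(\Omega)\to\mu(\Omega)$ together with $\liminf_k\mu_k(U)\ge\mu(U)$ on open sets to get weak-$*$/narrow convergence of $\varphi(|Dv^k|)$ to $\varphi(|Dv|)$ (the paper invokes \cite[Proposition 1.80]{AFP}, which is exactly your Portmanteau step), and finish with Riesz representation and outer regularity. You correctly identified that carrying the weight $|\psi|$ through the limit is the crux, which is precisely how the paper circumvents Reshetnyak's theorem.
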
 

\begin{proof}
By Proposition \ref{prop:poincare}, it holds that $v\in D_\varphi \subset L^\varphi(\Omega)$. Since $\xi \in X_{\varphi^*, 2}$, the integrals on the right-hand side of \eqref{anzellotti_def1} are well defined. If $v \in D_\varphi \cap C^1(\Omega)$, we can integrate back by parts  
\[\langle(\xi, Dv), \psi\rangle = - \int_\Omega \psi\, v\, \Div \xi \, \dd x - \int_\Omega v\, \xi \cdot \nabla \psi\, \dd x = \int_\Omega \psi\, \xi \cdot \nabla v \,\dd x\]
 and estimate by Young's inequality
\begin{equation} \label{eq:young_est_loc} \left|\langle(\xi, Dv), \psi\rangle\right| \leq \int_U |\psi|\, |\xi| \, |\nabla v|\, \dd x \leq \int_\Omega |\psi|\, \varphi(x,|\nabla v|) \, \dd x + \int_\Omega |\psi|\, \varphi^*(x,|\xi|) \, \dd x. 
\end{equation} 
By Lemma \ref{lem:approx_improv}, for arbitrary $v \in D_\varphi$, we can take a sequence $(v^k) \subset C^\infty(\Omega)$ that approximates $v$ in modular and in $L^2(\Omega)$. Then, for the first term in the definition of $\langle(\xi, D v^k), \psi\rangle$, since $\psi \in C_c^1(\Omega)$, $\mathrm{div}(\xi) \in L^2(\Omega)$ and $v_k \rightarrow v$ in $L^2(\Omega)$, we obtain
\begin{equation*}
\lim_{k\to +\infty}  \int_\Omega \psi\, v^k \, \Div \xi \, \dd x  =   \int_\Omega \psi\, v \, \Div \xi \, \dd x.
\end{equation*}
As for the second term, since $\psi \in C_c^1(\Omega)$, $\xi \in X_{\varphi^*, 2}$ and $v_k \rightarrow v$ in $L^\varphi(\Omega)$, we obtain
\begin{equation*}
\lim_{k\to +\infty} \int_\Omega v^k \, \xi \cdot \nabla \psi\, \dd x =  \int_\Omega v\, \xi \cdot \nabla \psi\, \dd x.
\end{equation*}
Consequently, $\langle(\xi, Dv^k), \psi\rangle \to \langle(\xi, Dv), \psi\rangle.$ The convergence of $v^k$ to $v$ in modular means that
$$\int_\Omega \varphi(x,|\nabla v^k|) \, \dd x \to \int_\Omega \varphi(|Dv|).$$
On the other hand, by \cite[Theorem 6.4]{EHH}, 
\[\liminf_{k \to \infty} \int_U \varphi(x,|\nabla v^k|) \, \dd x \geq \int_U \varphi(|D v|)\] 
for any open $U\subset \Omega$. Thus, by \cite[Proposition 1.80]{AFP}, $\varphi(|D v^k|) \weaklystar \varphi(|Dv|)$. We deduce from \eqref{eq:young_est_loc} that 
\begin{equation} \label{eq:young_est_loc_lim} \left|\langle(\xi, Dv), \psi\rangle\right|  \leq \int_\Omega |\psi|\,\dd \varphi(|D v|) + \int_\Omega |\psi|\varphi^*(x,|\xi|) \, \dd x. 
\end{equation}
In particular, 
\begin{equation}\label{eq:estimateforpairingwithpsi}
\left|\langle(\xi, Dv), \psi\rangle\right| \leq  \|\psi\|_{C_0(\Omega)} \left(\cE_\varphi(v) + \int_\Omega \varphi^*(x,|\xi|) \, \dd x\right)
\end{equation}
for all $\psi \in C_c^1(\Omega)$. Thus, \eqref{anzellotti_def1} uniquely defines (by density) an element $(\xi, Dw)$ of $C_0(\Omega)^*$, i.e., a finite Radon measure. Now, restricting ourselves to $\psi$ such that $\mathrm{supp}\, \psi \subset U$ and $|\psi| \leq 1$, we get from \eqref{eq:young_est_loc_lim} that 
\begin{equation}\label{eq:estimateforopensubs}
\left| \int_U (\xi, Dv) \right| \leq \int_U |(\xi, Dv)|   \leq  \int_U \varphi(|D v|) + \int_U \varphi^*(x,|\xi|) \, \dd x,
\end{equation}
for all open sets $U\subset \Omega$. From here, appealing to outer regularity property of Radon measures, we conclude that \eqref{eq:anzellotti_young} holds.

\end{proof} 

\begin{lem} \label{lem:strict} 
Suppose that $\varphi \in \Phi_c\cap C(\Omega \times [0, \infty))$ satisfies \ref{A0}, \ref{RVA1} and \ref{aDec}, and $\varphi^*$ satisfies \ref{VA1}. Let $v \in D_\varphi$ and $\xi \in  X_{\varphi^*,2}$. If $v^k \to v$ in $L^2(\Omega)$ and $\cE_{\varphi}(v^k) \to \cE_{\varphi}(v)$, then 
\[ (\xi, Dv^k)(\Omega) \to (\xi, Dv)(\Omega).\]
\end{lem}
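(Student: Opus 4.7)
My plan is to prove the statement by combining weak-$*$ convergence of the Anzellotti measures $(\xi, Dv^k)$ on open subsets of $\Omega$ with uniform control of their mass on boundary layers. First, the Young-type estimate \eqref{eq:anzellotti_young} gives the uniform mass bound $|(\xi, Dv^k)|(\Omega) \leq \cE_\varphi(v^k) + \int_\Omega \varphi^*(x,|\xi|)\,\dd x \leq C$, since $\cE_\varphi(v^k)$ is convergent. Moreover, I would argue that the measures $\mu_k := \varphi(|Dv^k|)$ converge weakly-$*$ to $\mu := \varphi(|Dv|)$ as finite Radon measures on $\Omega$: the $L^2$-convergence of $v^k$ yields $L^1_\loc$-convergence and hence $\liminf_k \mu_k(U) \geq \mu(U)$ for every open $U \subset \Omega$ by the lower semicontinuity of the localised $\TV_\varphi$, and together with the assumed mass equality $\mu_k(\Omega) \to \mu(\Omega)$ this gives the desired weak-$*$ convergence via \cite[Proposition 1.80]{AFP}.

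Second, I would show that $(\xi, Dv^k) \weaklystar (\xi, Dv)$ as Radon measures on $\Omega$. For $\psi \in C^1_c(\Omega)$, the first term on the right-hand side of \eqref{anzellotti_def1} converges by the $L^2$-convergence of $v^k$ combined with $\psi\Div\xi \in L^2(\Omega)$. For the second term, $\int v^k \xi \cdot \nabla \psi \,\dd x \to \int v \xi \cdot \nabla \psi \,\dd x$, I would exploit the uniform $L^\varphi$-bound on $v^k$ furnished by Proposition \ref{prop:poincare}, the $L^{\varphi^*}$-integrability of $\xi \cdot \nabla \psi$ (which is compactly supported since $\psi$ is), and H\"older's inequality in Musielak--Orlicz spaces, together with a.e.\ convergence along a subsequence and a Vitali-type equi-integrability argument to pass to the limit.

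Finally, to pass from this interior weak-$*$ convergence to convergence of total masses on $\Omega$, I would choose cutoffs $\eta_j \in C^1_c(\Omega)$ with $0 \leq \eta_j \leq 1$ and $\eta_j = 1$ on $\Omega_{1/j} := \{x \in \Omega : \mathrm{dist}(x, \partial\Omega) > 1/j\}$. Dominated convergence for finite Radon measures gives $\langle (\xi, Dv^k), \eta_j\rangle \to (\xi, Dv^k)(\Omega)$ as $j \to \infty$ for each $k$, and \eqref{eq:anzellotti_young} supplies the uniform tail estimate
\[
|(\xi, Dv^k)(\Omega) - \langle (\xi, Dv^k), \eta_j\rangle| \leq \mu_k(\Omega \setminus \Omega_{1/j}) + \int_{\Omega\setminus\Omega_{1/j}} \varphi^*(x,|\xi|)\,\dd x.
\]
The second term tends to $0$ as $j \to \infty$ uniformly in $k$ by absolute continuity of the Lebesgue integral; for the first, the weak-$*$ plus mass convergence $\mu_k \weaklystar \mu$ from the first step yields $\limsup_k \mu_k(\Omega \setminus \Omega_{1/j}) \leq \mu(\Omega \setminus \Omega_{1/j})$, whose right-hand side tends to $0$ as $j \to \infty$. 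A triangle-inequality argument using the convergence of the Anzellotti measures against the fixed cutoff $\eta_j$ then closes the proof.

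The main obstacle I anticipate is the convergence $\int v^k \xi \cdot \nabla \psi \,\dd x \to \int v \xi \cdot \nabla \psi \,\dd x$ in the second step, since $\xi$ need not lie in $L^2(\Omega)$ (only in $L^p$ for some $p > 1$ via the inclusion $L^{\varphi^*} \hookrightarrow L^p$). This blocks a direct duality argument from the $L^2$-convergence of $v^k$ and forces one to work in Musielak--Orlicz spaces, combining the absolute continuity of $\xi\cdot\nabla\psi$ in $L^{\varphi^*}$ with the uniform $L^\varphi$-bound on $v^k$ from Proposition \ref{prop:poincare}.
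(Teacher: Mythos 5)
Your overall architecture is the same as the paper's: control the mass of the pairings near $\partial\Omega$ uniformly via the Young-type inequality \eqref{eq:anzellotti_young} together with the narrow convergence $\varphi(|Dv^k|)\weaklystar\varphi(|Dv|)$ (lower semicontinuity on open sets plus convergence of total masses, \cite[Proposition 1.80]{AFP}), and combine this with convergence of the pairings tested against a fixed compactly supported cutoff. The paper does exactly this with a single cutoff $g$ equal to $1$ on a set $A\Subset\Omega$ chosen so that both tail integrals are below $\epsilon/2$; your $\eta_j$-exhaustion is the same argument. Up to that point your proposal is fine.

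The gap is in your second step, the convergence $\int_\Omega v^k\,\xi\cdot\nabla\psi\,\dd x\to\int_\Omega v\,\xi\cdot\nabla\psi\,\dd x$. You correctly identify this as the obstacle, but the proposed resolution --- uniform $L^\varphi$-bound on $v^k$ from Proposition~\ref{prop:poincare} plus H\"older plus ``a Vitali-type equi-integrability argument'' --- does not close. Uniform integrability of the products $v^k(\xi\cdot\nabla\psi)$ would follow from H\"older if $\xi\cdot\nabla\psi$ had absolutely continuous norm in $L^{\varphi^*}$, i.e.\ if $\|\chi_E\,\xi\cdot\nabla\psi\|_{L^{\varphi^*}}\to0$ as $|E|\to0$; but since $\varphi^*$ is not assumed to satisfy \ref{aDec}, finiteness of the modular $\int_\Omega\varphi^*(x,|\xi|)\,\dd x$ does not give this (already for $\varphi(x,t)=t$ one has $\varphi^*=\infty\chi_{(1,\infty)}$ and $\|\chi_E\xi\|_{L^{\varphi^*}}=\|\chi_E\xi\|_{L^\infty}$, which need not vanish as $|E|\to0$). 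The alternative route via Young's inequality requires equi-integrability of the family $\varphi(\cdot,|v^k|)$, which does not follow from a uniform bound on the modulars. The paper sidesteps this entirely: its proof of the lemma silently replaces the given sequence by the modular approximating sequence of \cite[Proposition 7.1]{EHH}, which converges to $v$ in $L^\varphi(\Omega)$ in \emph{norm}, so that H\"older's inequality $\bigl|\int_\Omega(v^k-v)\,\xi\cdot\nabla\psi\,\dd x\bigr|\le 2\|v^k-v\|_{L^\varphi(\Omega)}\|\xi\cdot\nabla\psi\|_{L^{\varphi^*}(\Omega)}$ gives the interior convergence at once; this is also the only situation in which the lemma is used (Theorem~\ref{thm:gaussgreenformula} applies it to the sequence of Lemma~\ref{lem:approx_improv}). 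To repair your argument, either add $v^k\to v$ in $L^\varphi(\Omega)$ to the hypotheses you actually use (matching the paper's de facto proof), or supply a genuine weak-compactness argument for bounded sets of $L^\varphi(\Omega)$ paired against $L^{\varphi^*}(\Omega)$, which is not automatic under the standing assumptions.
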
 

\begin{proof} 
For any $\epsilon >0$, we can find an open set $A\Subset \Omega$ such that 
\begin{equation*}
\varphi(|Dv|)(\Omega \backslash A)<\frac{\epsilon}{2}  \quad \text{and} \quad \int_{\Omega \backslash A} \varphi^{*}(x,|\xi|) \, \dd x < \frac{\epsilon}{2}. 
\end{equation*}
Let $v^{k}\in C^{\infty}(\Omega)$ be the sequence from \cite[Proposition 7.1]{EHH} converging to $v$ in the sense that $v^{k}\to v$ in $L^{2}$ and $\cE_{\varphi} (v^{k})\to \cE_{\varphi}(v)$. If we consider $g\in C^{\infty}_{c}(\Omega)$ such that $0\le g \le 1$ in $\Omega$ and $g \equiv 1$ in $A$, then we have
 \begin{equation}\label{estdif}
\bigg| \int_{\Omega} (\xi,Dv^{k})-\int_{\Omega} (\xi,Dv) \bigg| \le \bigg| \langle (\xi,Dv^{k}),g \rangle - \langle (\xi,Dv),g \rangle \bigg| 
+   \int_{\Omega} (1-g)|(\xi,Dv^{k})|+\int_{\Omega} (1-g)|(\xi,Dv)|
 \end{equation}
Using \eqref{eq:anzellotti_young} we can estimate
\begin{equation*}
\int_{\Omega} (1-g)|(\xi,Dv)| \le \int_{\Omega \backslash A} |(\xi,Dv)| \le \varphi(|Dv|)(\Omega \backslash A) + \int_{\Omega \backslash A} \varphi^{*}(x,|\xi|) \, \dd x < \frac{\epsilon}{2} +\frac{\epsilon}{2} = \epsilon  
\end{equation*}
and 
\begin{align*}
\limsup_{k \rightarrow \infty} \int_{\Omega} (1-g)|(\xi,Dv^{k})| &\le \limsup_{k}\int_{\Omega \backslash A} |(\xi,Dv^{k})| \\ 
&\le \varphi(|Dv|)(\Omega \backslash A) + \int_{\Omega \backslash A} \varphi^{*}(x,|\xi|) \, \dd x < \frac{\epsilon}{2} +\frac{\epsilon}{2} = \epsilon .
\end{align*}
Since $\epsilon >0$ was arbitrary and $\langle (\xi,Dv^{k}),g \rangle \to \langle (\xi,Dv),g \rangle$, we have that
\begin{equation*}
  \bigg| \int_{\Omega} (\xi,Dv^{k})-\int_{\Omega} (\xi,Dv) \bigg| \to 0  
\end{equation*}
as $k\to \infty $ and the proof is completed.
\end{proof} 


\begin{thm}[Gauss--Green formula]\label{thm:gaussgreenformula}
Suppose that $\varphi \in \Phi_c\cap C(\Omega \times [0, \infty))$ satisfies \ref{A0}, \ref{RVA1} and \ref{aDec}, and $\varphi^*$ satisfies \ref{VA1}. Let $v \in D_\varphi$ and $\xi \in X_{\varphi^*, 2}$. Then,
\begin{equation*}
\int_\Omega (\xi, Dv) + \int_\Omega v \Div \xi \, \dd x = 0
\end{equation*}
provided that $\xi \cdot \nu = 0$.
\end{thm}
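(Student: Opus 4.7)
The plan is to establish the identity first for functions smooth up to the boundary and then pass to the limit using the approximation result of Lemma \ref{lem:approx_improv} together with the strict-convergence continuity of the pairing from Lemma \ref{lem:strict}. Concretely, given $v \in D_\varphi$, I would pick a sequence $v^k \in C^\infty(\overline{\Omega})$ provided by Lemma \ref{lem:approx_improv}, so that $v^k \to v$ in $L^2(\Omega)$ and $\cE_\varphi(v^k) \to \cE_\varphi(v)$. The target identity then reduces to two limit passages on the approximating functions.

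For smooth $v^k \in C^\infty(\overline{\Omega})$ I would first argue that the measure $(\xi, Dv^k)$ coincides with $\xi \cdot \nabla v^k \, \cL^n$. Indeed, starting from the defining formula \eqref{anzellotti_def1} and performing the integration by parts
\[ \int_\Omega v^k\, \xi \cdot \nabla \psi \, \dd x = -\int_\Omega \psi \, v^k \Div \xi \,\dd x - \int_\Omega \psi\, \xi \cdot \nabla v^k\, \dd x, \]
which is legal because $\psi v^k \in W^{1,\infty}$ with compact support in $\Omega$ and $\xi \in X_{\varphi^*,2}$, one gets $\langle (\xi, Dv^k), \psi\rangle = \int_\Omega \psi\, \xi \cdot \nabla v^k \, \dd x$ for every $\psi \in C^1_c(\Omega)$, hence the identification of measures. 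Since $v^k \in W^{1,q}(\Omega)$ and $\xi \cdot \nu = 0$, the Gauss--Green formula \eqref{Green} then yields
\[ \int_\Omega (\xi, Dv^k) + \int_\Omega v^k \Div \xi \, \dd x = \int_\Omega \xi \cdot \nabla v^k \, \dd x + \int_\Omega v^k \Div \xi \, \dd x = \langle \nu \cdot \xi, v^k|_{\partial\Omega}\rangle = 0. \]

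It remains to pass to the limit as $k \to \infty$. The term $\int_\Omega v^k \Div \xi \, \dd x \to \int_\Omega v \Div \xi \, \dd x$ since $v^k \to v$ in $L^2(\Omega)$ and $\Div \xi \in L^2(\Omega)$. For the pairing term, Lemma \ref{lem:strict} applies directly, because the approximating sequence from Lemma \ref{lem:approx_improv} satisfies $v^k \to v$ in $L^2(\Omega)$ and $\cE_\varphi(v^k) = \int_\Omega \varphi(x,|\nabla v^k|)\, \dd x \to \cE_\varphi(v)$, and thus $\int_\Omega (\xi, Dv^k) \to \int_\Omega (\xi, Dv)$. Combining both limits finishes the proof.

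The only delicate points I anticipate are bookkeeping ones: checking that the integration by parts used to identify $(\xi, Dv^k)$ with $\xi \cdot \nabla v^k \, \cL^n$ on smooth functions is justified (which follows from $\nabla v^k \in L^\infty$ and $\xi \in L^1$), and verifying that the approximating sequence from Lemma \ref{lem:approx_improv} fits the hypotheses of Lemma \ref{lem:strict}. The condition $\xi \cdot \nu = 0$ is used exactly once, to kill the boundary term in the classical Gauss--Green formula for the smooth approximants.
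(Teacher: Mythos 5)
Your proof is correct and follows essentially the same route as the paper: approximate by $v^k \in C^\infty(\overline{\Omega})$ via Lemma \ref{lem:approx_improv}, apply the classical Gauss--Green formula \eqref{Green} with $\xi\cdot\nu=0$ to the approximants, and pass to the limit using $L^2$-convergence for the divergence term and Lemma \ref{lem:strict} for the pairing term. Your explicit verification that $(\xi, Dv^k)=\xi\cdot\nabla v^k\,\cL^n$ for smooth $v^k$ is a step the paper uses implicitly, so making it explicit is a welcome addition rather than a divergence.
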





\begin{proof}
Take a sequence $v^{k}\in C^{\infty}(\overline{\Omega})$ converging to $v$ as in the statement of Lemma \ref{lem:approx_improv}. By the Gauss--Green formula \eqref{Green} in the definition of normal trace, for $\xi \in X_{\varphi^{*},2}$ with $\xi \cdot \nu = 0$ we have 
\begin{equation*}
\int_{\Omega} \xi \cdot \nabla v^{k} \, \dd x + \int_{\Omega}  v^{k} \Div \xi \, \dd x=0.
\end{equation*}
By Lemma \ref{lem:strict}, we get
\begin{equation*}
\lim_{k\to \infty} \int_{\Omega} \xi \cdot \nabla v^{k} \, \dd x = \lim_{k\to \infty} \int_{\Omega} (\xi,Dv^{k}) = \int_{\Omega} (\xi,Dv) .
\end{equation*}
Since $v^{k}\to v$ in $L^{2}(\Omega)$ and $\Div \xi \in L^{2}(\Omega)$, we have 
\begin{equation*}
\lim_{k\to \infty } \int_{\Omega} v^{k} \Div \xi    \, \dd x = \int_{\Omega} v  \Div \xi    \, \dd x 
\end{equation*}
and the proof is completed.
\end{proof}

As a consequence, we get the second part of the anticipated characterisation of the subdifferential of $\cE_\varphi$.

\begin{thm}\label{thm:localcharacterisation}
Assume that $\varphi \in \Phi_c\cap C(\Omega \times [0, \infty))$ satisfies \ref{A0}, \ref{aDec} and \ref{RVA1}, and that $\varphi^*$ satisfies \ref{VA1}. Then, the following conditions are equivalent:
\begin{itemize}
\item[(a)] $w \in \partial \cE_\varphi(v)$;

\item[(b)] $v \in BV(\Omega)$ and there exists $\xi \in X_{\varphi^*,2} \subset L^2(\Omega)^n$ with $\mathrm{div} \, \xi = w$ and $\xi \cdot \nu = 0$ such that
\begin{equation}\label{eq:equalityincharacterisation2}
\int_\Omega v w \, \dd x = \int_\Omega \varphi(|Dv|) + \int_\Omega \varphi^*(x,|\xi|) \, \dd x;
\end{equation}

\item[(c)] $v \in BV(\Omega)$ and there exists $\overline{\xi} \in  X_{\varphi^*,2} \subset L^2(\Omega)^n$ with $-\mathrm{div} \, \overline{\xi} = w$ and $\overline{\xi} \cdot \nu = 0$ such that
\begin{equation}\label{eq:equalityincharacterisation3}
\int_\Omega (\overline{\xi}, Dv) = \int_\Omega \varphi(|Dv|) + \int_\Omega \varphi^*(x,|\overline{\xi}|) \, \dd x;
\end{equation}

\item[(d)] $v \in BV(\Omega)$ and there exists $\overline{\xi} \in X_{\varphi^*,2} \subset L^2(\Omega)^n$ with $-\mathrm{div} \, \overline{\xi} = w$ and $\overline{\xi} \cdot \nu = 0$ such that
\begin{equation}\label{eq:equalityincharacterisation4}
(\overline{\xi}, Dv)  = \varphi(|Dv|) + \varphi^*(\cdot,|\overline{\xi}|) \, \mathcal{L}^n \qquad \mbox{as measures}.
\end{equation}
\end{itemize}
\end{thm}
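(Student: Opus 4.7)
The plan is to invoke Theorem \ref{thm:abstractcharacterisation}, which already delivers the equivalence (a) $\Leftrightarrow$ (b), and then to pass between (b), (c) and (d) using the Gauss--Green formula (Theorem \ref{thm:gaussgreenformula}) and the Young-type inequality \eqref{eq:anzellotti_young} for the Anzellotti pairing. The sign discrepancy ($\Div\xi=w$ in (b) vs. $-\Div\overline{\xi}=w$ in (c), (d)) is purely cosmetic and will be absorbed by the substitution $\overline{\xi}=-\xi$.

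For (b) $\Leftrightarrow$ (c), given $\xi$ admissible in (b), I would set $\overline{\xi}:=-\xi$. Then $-\Div\overline{\xi}=\Div\xi=w$, $\overline{\xi}\cdot\nu=0$, $\overline{\xi}\in X_{\varphi^*,2}$, and $|\overline{\xi}|=|\xi|$, so the divergence, trace, and integrability conditions in (c) are satisfied. Theorem \ref{thm:gaussgreenformula} applied to $\overline{\xi}$ yields
\[
\int_\Omega (\overline{\xi}, Dv) = -\int_\Omega v \Div \overline{\xi}\, \dd x = \int_\Omega v w\, \dd x,
\]
so \eqref{eq:equalityincharacterisation2} and \eqref{eq:equalityincharacterisation3} become identical statements. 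The reverse direction (c) $\Rightarrow$ (b) proceeds identically after setting $\xi := -\overline{\xi}$.

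For the equivalence (c) $\Leftrightarrow$ (d), the direction (d) $\Rightarrow$ (c) is immediate by integrating \eqref{eq:equalityincharacterisation4} over $\Omega$. For (c) $\Rightarrow$ (d), observe that \eqref{eq:anzellotti_young} combined with the elementary bound $\mu\leq|\mu|$ for any signed Radon measure $\mu$ shows that
\[
\sigma := \varphi(|Dv|) + \varphi^*(\cdot, |\overline{\xi}|)\, \mathcal{L}^n - (\overline{\xi}, Dv)
\]
is a \emph{nonnegative} Radon measure on $\Omega$. Equation \eqref{eq:equalityincharacterisation3} states that $\sigma(\Omega) = 0$, whence $\sigma \equiv 0$, and this is precisely \eqref{eq:equalityincharacterisation4}.

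The only genuinely substantive step is (c) $\Rightarrow$ (d): it is here that the Anzellotti-type pairing earns its keep, upgrading an integral identity to a pointwise identity of measures by the standard trick of exhibiting a nonnegative measure of total mass zero. The remaining content is bookkeeping on sign conventions together with a clean application of the Gauss--Green formula; all the technically delicate work has already been absorbed into Theorem \ref{thm:gaussgreenformula} and the measure-level Young inequality \eqref{eq:anzellotti_young} proved in the preceding subsection.
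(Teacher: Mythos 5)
Your proposal is correct and follows essentially the same route as the paper: (a) $\Leftrightarrow$ (b) via Theorem \ref{thm:abstractcharacterisation}, (b) $\Leftrightarrow$ (c) via the Gauss--Green formula with $\overline{\xi}=-\xi$, and (c) $\Leftrightarrow$ (d) via the measure-level Young inequality \eqref{eq:anzellotti_young}. Your phrasing of the last step as ``a nonnegative measure of total mass zero vanishes'' is just a cleaner packaging of the paper's contradiction argument on a Borel set $B$ and its complement.
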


\begin{proof}
The equivalence $(a) \Leftrightarrow (b)$ was shown in Theorem \ref{thm:abstractcharacterisation}; it remains to show the equivalences $(b) \Leftrightarrow (c) \Leftrightarrow (d)$. We first show the implication $(b) \Rightarrow (c)$. Observe that in order for equality \eqref{eq:equalityincharacterisation2} to hold, we necessarily have $v \in D_\varphi$, and take $\xi$ as in the statement; by Theorem~\ref{thm:gaussgreenformula}, the left-hand side in equation \eqref{eq:equalityincharacterisation2} can be written as
\begin{equation*}
\int_\Omega v w \, \dd x = \int_\Omega v \, \mathrm{div} \, \xi \, \dd x = - \int_\Omega (\xi, Dv) = \int_\Omega (-\xi, Dv),
\end{equation*}
and entering this expression into \eqref{eq:equalityincharacterisation2} we get that
\begin{equation*}
\int_\Omega (-\xi, Dv) = \int_\Omega \varphi(|Dv|) + \int_\Omega \varphi^*(x,|\xi|) \, \dd x,
\end{equation*}
which shows \eqref{eq:equalityincharacterisation3} once we choose $\overline{\xi} = - \xi$.

In the other direction, to show the implication $(c) \Rightarrow (b)$, take $\overline{\xi}$ as in the statement. Again, in order for equality \eqref{eq:equalityincharacterisation3} to hold, we necessarily have $v \in D_\varphi$, so applying the Gauss--Green formula again yields
\begin{equation*}
\int_\Omega (\overline{\xi}, Dv) = - \int_\Omega v \, \mathrm{div} \, \overline{\xi} \, \dd x = \int_\Omega v \, ( - \mathrm{div} \, \overline{\xi}) \, \dd x = \int_\Omega v w \, \dd x,
\end{equation*}
and entering this expression into \eqref{eq:equalityincharacterisation3} gives the desired equality \eqref{eq:equalityincharacterisation2}.

The implication $(d) \Rightarrow (c)$ is immediate and follows from integrating the condition \eqref{eq:equalityincharacterisation4} over $\Omega$. In the other direction, recall that the Young inequality \eqref{eq:anzellotti_young} states that for every Borel set $B$ we have the inequality
\begin{equation} \label{eq:youngB}
\int_B (\overline{\xi}, Dv)  \leq \int_B |(\overline{\xi}, Dv)|  \leq  \int_B \varphi(|D v|) + \int_B \varphi^*(x,|\overline{\xi}|) \, \dd x.
\end{equation}
In particular, the same inequality holds with $\Omega \setminus B$ in the place of $B$. If there were $B$ such that inequality \eqref{eq:youngB} is strict, adding the inequalities for $B$ and $\Omega \setminus B$ would lead to
\begin{equation*}
\int_\Omega (\overline{\xi}, Dv)  <  \int_\Omega \varphi(|D v|) + \int_\Omega \varphi^*(x,|\overline{\xi}|) \, \dd x = \int_\Omega (\overline{\xi}, Dv), 
\end{equation*}
where the last equality follows from \eqref{eq:equalityincharacterisation3}. The obtained contradiction shows that \eqref{eq:youngB} is an equality for all $B$ Borel, i.e., \eqref{eq:equalityincharacterisation4} holds.
\end{proof}

\begin{remark}
For the special choice $\varphi \in \Phi_{c}$ with $\varphi(t) = t$, we have the classical total variation 
$$\cE(v)=\int_{\Omega}|Dv|.$$
It is clear that such $\varphi$ satisfies the conditions \ref{A0} for any $\beta \in (0,1]$, \ref{aDec} for any $q\ge 1$ and \ref{RVA1} for any modulus of continuity. Moreover, the conjugate 
$$\varphi^*(t)= \infty \chi_{(1,\infty)}(t)$$
satisfies \ref{VA1} for any modulus of continuity. Hence, applying Theorem \ref{thm:localcharacterisation}, we recover the characterisation of \cite[Proposition 1.10]{ACMBook} for $v\in BV(\Omega)\cap L^2(\Omega)$ and a vector field in the class 
\[X_{\infty,2}=\{\xi\in L^{\infty}(\Omega)^n : \operatorname{div}\xi \in L^2(\Omega)\}.\]
In point (b), the condition $\int_{\Omega} \varphi^{*}(x,|\xi|) \, \dd x < +\infty$ and the 1-homogeneity of $\varphi$ implies that $\varphi^*(|\xi|)=0$~a.e., which translates to $\|\xi\|_{\infty}\le 1$ (and similarly for $\overline{\xi}$ in points (c) and (d)). 

As a result, one may reformulate the equivalence between points (a) and (d) of Theorem \ref{thm:localcharacterisation} for the isotropic total variation as the following statement: $w \in \partial \mathcal{E}(v)$ if and only if there exists $\overline{\xi} \in  L^\infty(\Omega)^n$ with $\| \overline{\xi} \|_\infty \leq 1$ and $\mathrm{div} \, \overline{\xi} \in L^2(\Omega)^n$ such that the following conditions are satisfied:
\begin{equation*}
-\mathrm{div} \, \overline{\xi} = w;
\end{equation*}
\begin{equation*}
\overline{\xi} \cdot \nu = 0;
\end{equation*}
and
\begin{equation*}
(\overline{\xi}, Dv) = |Dv|,
\end{equation*}
which recovers the result of \cite[Proposition 1.10]{ACMBook}. From this, one recovers the optimality conditions for the ROF problem via relation \eqref{EL_abstract}.

\end{remark}

\begin{remark}
The characterisation of $\cE_\varphi$ given in Theorem \ref{thm:localcharacterisation} also allows us to characterise the gradient flow of this functional in an explicit way. By the Brezis--K\={o}mura theorem, see \cite{Brezis0,Komura} for the original results and \cite{Brezis} for an overview, the abstract Cauchy problem
\begin{equation}\label{eq:gradientflow}
\left\{ \begin{array}{ll} 0 \in \frac{\dd u}{\dd t} + \partial \cE_\varphi (u(t)) \, \quad & \text{if } t \in (0, T);
  \\[10pt] u(0) = u_0 \in L^2(\Omega) \quad & \end{array} \right.
\end{equation}
has a unique strong solution $u \in C([0,T]; L^2(\Omega)) \cap W^{1,2}_{\rm loc}((0, T]; L^2(\Omega))$. By the characterisation of $\cE_\varphi$, $u$ is a solution to \eqref{eq:gradientflow} if and only if for almost all $t \in (0,T)$ we have $u(t) \in BV(\Omega)$ and there exist vector fields $\overline{\xi}(t) \in  X_{\varphi^*,2}$ such that the following conditions hold:
$$ u_t(t,\cdot) = \mathrm{div}(\overline{\xi}(t)) \quad \text{in } \ \Omega, $$
$$ (\overline{\xi}(t), Du(t)) = \varphi(|Du(t)|) + \varphi^*(\cdot,|\overline{\xi}|(t)) \, \mathcal{L}^n \quad \text{as measures},$$
$$ \overline{\xi}(t) \cdot \nu = 0. $$
Again, for the choice $\varphi(t) = t$, we recover the characterisation of solutions to the Neumann problem for the total variation flow given in \cite{ACMBook}.
\end{remark}

\section{Particular choices of $\varphi$}\label{sec:particularcases}

In this Section, we discuss some of the assumptions used in this paper in the context of the double-phase functional and the variable exponent case.

\begin{remark}[Conjugates]\label{rem:conjugates}
For reference, let us state the form of convex conjugates of certain $\Phi$-functions relevant to this paper, which may be computed using \cite[Lemma 3.5]{EHH}. For the double-phase case, i.e.,
$$\varphi_{\rm dp}(x,t) = t + a(x) \frac{t^{q}}{q},$$
we obtain the expression
$$\varphi_{\rm dp}^{*}(x,s) = \twopartdef{\infty \chi_{(1,\infty)}(s)}{\mbox{if } a(x) = 0;}{a(x)^{1-q'}\frac{(s-1)_{+}^{q'}}{q'}}{\mbox{if } a(x) > 0.}$$ 
In the variable exponent case, i.e.,
$$\varphi_{\rm var}(x,t)=\frac{1}{p(x)}t^{p(x)},$$ 
it holds that
$$\varphi_{\rm var}^{*}(x,s)= \twopartdef{\infty \chi_{(1,\infty)}(s)}{\mbox{if } p(x) = 1;}{\frac{1}{p'(x)}s^{p'(x)}}{\mbox{if } p(x) > 1.}$$


\end{remark}

Clearly, the condition \ref{A0} is satisfied, and conditions which ensure that they satisfy \ref{aDec} were identified e.g.\ in \cite{HarjulehtoHasto}. We therefore concentrate the discussion below on the conditions \ref{RVA1}, which is required for the $\Phi$-function $\varphi$, and the condition \ref{VA1} which is needed for its dual.

First we discuss condition \ref{RVA1}. In the literature, the variable exponent case is usually mentioned without the coefficient $\frac{1}{p(x)}$. In \cite[Proposition 3.2]{EHH} it is shown that the $\Phi$-function
$$\varphi_{\rm var}(x,t)=t^{p(x)}$$
satisfies the condition \ref{RVA1} if and only if $\frac{1}{p}$ is log-H\"{o}lder continuous, i.e.,
\begin{equation} \label{eq:lhc}
\bigg| \frac{1}{p(x)} - \frac{1}{p(y)} \bigg| \leq \frac{C}{\log(e + \frac{1}{|x-y|})}
\end{equation}
with $C>0$, and the following strong H\"{o}lder continuity condition for $\frac{1}{p}$:
\begin{equation} \label{eq:slhc1}
\lim_{x\to y} \bigg| 1 - \frac{1}{p(x)} \bigg| \log \frac{1}{|x-y|} = 0
\end{equation}
holds uniformly in $y \in \{ p = 1 \}$. In the next Proposition, we show a similar characterisation for the double phase functional.

\begin{prop}[Condition \ref{RVA1} for double-phase] 
The $\Phi$-function 
$$\varphi_{\mathrm{dp}}(x,t)=t+a(x)t^{q}$$
satisfies the condition \ref{RVA1} if and only if the weight function $a$ satisfies the almost H\"{o}lder continuity condition
\begin{equation}\label{eq:almostholder}
a(y) \leq C (a(x) + |x - y|^{n(q-1)})  \end{equation}
and is strongly H\"{o}lder continuous in $\{ a = 0 \}$, i.e., 
\begin{equation}\label{StrongHolder}
\lim_{x\to y} \frac{|a(x)|}{|x-y|^{n(q-1)}}=0
\end{equation}
uniformly in $y\in \{a=0\}.$    
\end{prop}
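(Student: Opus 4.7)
The condition \ref{RVA1} is a conjunction of two separate requirements: (i) the \ref{A1} condition, which must hold for every pair $x, y \in \Omega$; and (ii) a uniform modulus-of-continuity estimate required only on pairs $(x,y)$ such that $\varphi_{\mathrm{dp},\infty}'(x) < \infty$ or $\varphi_{\mathrm{dp},\infty}'(y) < \infty$. Since $\varphi_{\mathrm{dp},\infty}'(x) = 1$ when $a(x) = 0$ and $+\infty$ otherwise, (ii) is substantive only for pairs where at least one of $a(x)$, $a(y)$ vanishes. The plan is to prove the equivalence by matching (i) with \eqref{eq:almostholder} and (ii) with \eqref{StrongHolder}.

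\textbf{First equivalence: (i) $\Leftrightarrow$ \eqref{eq:almostholder}.} This corresponds to the standard characterisation of \ref{A1} for double-phase integrands (cf.\ \cite[Proposition~7.2.1]{HarjulehtoHasto}). For \eqref{eq:almostholder} $\Rightarrow$ \ref{A1}, given $K > 0$ I would choose $\beta \in (0,1]$ small enough that $C\beta^q \leq 1$ and $C\beta^q K^{q-1} \leq 1-\beta$, and then, using $a(x) \leq C(a(y) + |x-y|^{n(q-1)})$ together with the consequence $|x-y|^{n(q-1)}\,t^q \leq K^{q-1}\,t$ of $t \leq K/|x-y|^n$, verify by direct expansion that $\varphi_{\mathrm{dp}}(x,\beta t) - \varphi_{\mathrm{dp}}(y,t) \leq 1$. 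For the converse I would apply \ref{A1} with $K=1$ to the pair $(y,x)$ in place of $(x,y)$ and bifurcate according to whether $a(x) \leq 2^{q-1}|x-y|^{n(q-1)}$ or $a(x) > 2^{q-1}|x-y|^{n(q-1)}$: in the first case insert $t = 1/(2|x-y|^n)$, and in the second case insert $t = (2a(x)|x-y|^n)^{-1/q}$, so that the admissibility $\varphi_{\mathrm{dp}}(x,t) \leq 1/|x-y|^n$ is saturated. A Young-type inequality $(2a(x))^{(q-1)/q}|x-y|^{n(q-1)/q} \leq 2a(x) + |x-y|^{n(q-1)}$ then delivers \eqref{eq:almostholder}.

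\textbf{Second equivalence: (ii) $\Leftrightarrow$ \eqref{StrongHolder}.} When $a(x) = 0$, the inequality to be checked reduces to $\varphi_{\mathrm{dp}}(x, t/(1+\omega)) = t/(1+\omega) \leq t \leq \varphi_{\mathrm{dp}}(y,t) + \omega$, which is automatic; hence only the case $a(y)=0$ is substantive. A direct algebraic rearrangement then shows that the required inequality is equivalent to
\[
a(x)\,t^q \leq \omega(|x-y|)\,(1 + t + \omega(|x-y|))(1+\omega(|x-y|))^{q-1}
\]
for every $t \in (0, K/|x-y|^n]$. The right-hand side is affine in $t$ while the left-hand side is convex (for $q \geq 1$), so their difference is concave; the inequality can therefore only fail at the endpoints, and since it holds trivially at $t = 0$, the decisive case is $t = K/|x-y|^n$, at which it collapses, after absorbing $O(1)$ factors valid for $\omega \leq 1$, to
\[
\frac{a(x)}{|x-y|^{n(q-1)}} \leq \frac{2^{q-1}}{K^{q-1}}\,\omega(|x-y|).
\]
For $(\Leftarrow)$, the explicit choice
\[
\omega(r) := K^{q-1}\sup\left\{\frac{a(x)}{|x-y|^{n(q-1)}}\,:\,y \in \Omega \cap \{a=0\},\ 0 < |x-y| \leq r\right\}
\]
is, by \eqref{StrongHolder}, a bona fide modulus of continuity (monotone in $r$ with $\omega(0^+) = 0$) and verifies the required bound. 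For $(\Rightarrow)$, the same specialised inequality forces this supremum to tend to zero as $r \to 0^+$, which is precisely \eqref{StrongHolder}.

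\textbf{Main obstacle.} The principal technical work lies in the converse direction of the first equivalence, where one must bifurcate the test value $t$ according to the relative sizes of $a(x)$ and $|x-y|^{n(q-1)}$ and invoke a Young-type inequality to extract the clean form \eqref{eq:almostholder}. The second equivalence is more mechanical, but the key points requiring care are the concavity argument reducing to the extremal $t = K/|x-y|^n$, and the verification that the supremum defining $\omega$ remains finite for each fixed $r > 0$, which follows from combining \eqref{StrongHolder} uniformly near $\{a=0\}$ with the boundedness of $a$ on $\Omega$.
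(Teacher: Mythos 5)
Your proposal is correct and follows essentially the same route as the paper: the substantive content is the equivalence between the restricted \ref{VA1}-type estimate on pairs with $a(x)=0$ or $a(y)=0$ and the strong H\"older condition \eqref{StrongHolder}, which you, like the paper, obtain by testing the inequality at the extremal admissible value $t = K/|x-y|^{n}$ and, for the converse, by taking $\omega(r)$ to be (a constant times) the supremum of $a(x)/|x-y|^{n(q-1)}$ over $y \in \{a=0\}$, $|x-y|\le r$. The only differences are cosmetic: the paper delegates the equivalence \ref{A1} $\Leftrightarrow$ \eqref{eq:almostholder} to a citation of \cite{HarjulehtoHasto} where you sketch a direct proof, and your reduction to the endpoint $t=K/|x-y|^n$ goes through concavity of the difference rather than the paper's monotonicity of $t \mapsto 1 + a(x)t^{q-1}$.
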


\begin{proof} The relation of the condition \ref{A1} and almost H\"{older} continuity of the weight function \eqref{eq:almostholder} has been proved in \cite[Proposition 7.2.2]{HarjulehtoHasto} (see also \cite{BCFM} for a more detailed discussion of this condition). Thus, we will focus on the strong H\"{o}lder continuity on the set $\{a=0\}$.
Suppose that $\varphi_{\mathrm{dp}}$ satisfies \ref{RVA1}; since the inequality in the condition holds for any $K > 0$, let us fix $K = 1$ and the corresponding modulus of continuity $\omega$. Let $y\in \{a=0\}$ or equivalently $\varphi^{'}_{\infty}(y)<\infty$, then $\varphi_{\mathrm{dp}}=t$. If we choose $t=|x-y|^{-n}\ge 1$ and set $r:=|x-y|$, we get 
$$\frac{t}{1+\omega(r)}+a(x)\left(\frac{t}{1+\omega(r)}\right)^{q}=\varphi_{\mathrm{dp}} \bigg(x,\frac{t}{1+\omega(r)} \bigg) \le \varphi_{\mathrm{dp}}(y,t)+\omega(r)\le (1+\omega(r))t,
$$
from which we have that $1+a(x)t^{q-1}\le (1+\omega(r))^{q+1}$ and this implies
$$
1\le 1+\frac{a(x)}{|x-y|^{n(q-1)}}\le (1+\omega(r))^{q+1}\to 1
$$
as $r\to 0^{+}$. Thus, $$\lim_{x\to y} \frac{|a(x)|}{|x-y|^{n(q-1)}}=0.$$
Conversely, assume that the weight $a$ satisfies \eqref{StrongHolder}. We will show that $\varphi_{\mathrm{dp}}$ satisfies \ref{RVA1}. We have that
$$
\omega_{a}(r):= \sup_{y\in \{a=0\}, x\in B_{r}(y)}  \frac{|a(x)|}{|x-y|^{n(q-1)}}\to 0
$$
as $r\to 0^{+}$. To prove the \ref{RVA1} when $a(y)=0$, we will show that
$$
\varphi \bigg(x,\frac{t}{1+\omega(r)} \bigg) \leq \varphi(y,t),
$$
when $\varphi(y,t)\in [0,\frac{K}{|x-y|^{n}}]$, for some modulus of continuity $\omega(r)$ (depending on $K>0$). The above inequality follows if we show that $
1+a(x)t^{q-1} \le 1+\omega(r).
$ The inequality holds trivially if $t\in [0,1]$. For $t>1$, since the function $t\mapsto 1+a(x)t^{q-1}$ is increasing, it is enough to check for $t=\frac{K}{|x-y|}$.
We have
$$
1+a(x)t^{q-1}\le 1+ a(x) \frac{K^{q-1}}{|x-y|^{n(q-1)}} \le 1+ K^{q-1}\omega_{a}(r), 
$$
thus choosing $\omega(r):=  K^{q-1} \omega_{a}(r)$ the claim holds.
Now if $a(x)=0$ in the \ref{RVA1} condition then
$$
\frac{t}{1+\omega(r)}\le t \le t+a(y)t^{q}+\omega(r)
$$
holds for some modulus of continuity $\omega(r)$ since $a\ge 0$.
\end{proof}

Next, we discuss the full condition \ref{VA1}. In fact, there are several conditions of this type appearing in the literature; the first one was given in \cite{HastoOk}. More precisely, in \cite[Definition 4.1]{HastoOk} the following version of \ref{VA1} which includes a comparison of values of $\varphi$ on small balls was introduced: 

\begin{enumerate}
\item[(VA1-ball)]\labeltext{(VA1-ball)}{VA1-ball}
   There exists a non-decreasing continuous function $\omega: \left[ 0,\infty \right)\to \left[0,1 \right]$ with $\omega(0)=0$ such that for any small ball $B_{r} \Subset \Omega$
 \begin{equation}\label{condition}
      \varphi_{B_{r}}^{+}(t) \le (1+\omega(r)) \varphi_{B_{r}}^{-}(t), \text{for all } t>0 \text{ with } \varphi_{B_{r}}^{-}(t) \in \left[\omega(r),|B_{r}|^{-1}  \right],
   \end{equation}
where $ \varphi_{B_{r}}^{+}(t)=\sup_{x\in B_{r}}\varphi(x,t)$   and $ \varphi_{B_{r}}^{-}(t)=\inf_{x\in B_{r}}\varphi(x,t)$.
\end{enumerate}

In \cite[section 8]{HastoOk} the authors provide examples of this continuity condition in several specific settings, including the perturbed homogeneous case, the variable exponent case and the double phase case. Another variant of this condition was introduced by the same authors in \cite{HastoOk22ARMA}. These definitions are not well-adapted to the linear growth case, as is made evident e.g.\ from the fact that in condition \ref{VA1-ball} the function $\omega(r)$ yields a uniform continuity estimate over a sufficiently small ball, while in the linear-growth case when the conjugate function may take the value $\infty$ we do not expect such behaviour; therefore, in this paper, we use the condition from \cite{EHH} (and subsequent approximation results which rely on this condition). In this case, the only known result on the characterisation of \ref{VA1} concerns the variable exponent case. Again without the coefficient $\frac{1}{p(x)}$, it is claimed in \cite{HJR} that for the $\Phi$-function $\varphi_{\rm var}(x,t)=t^{p(x)}$ (a reformulation of) \ref{VA1} is implied by the strong $\log$-H\"older condition, i.e.,
\begin{equation}\label{eq:slhc}
\bigg| \frac{1}{p(x)} - \frac{1}{p(y)} \bigg| \leq \frac{\omega(|x-y|)}{\log(e + \frac{1}{|x-y|})}
\end{equation}
for some modulus of continuity $\omega$. 

However, let us note that condition \ref{VA1} (for the conjugate) is only needed in this paper in order to apply the approximation results from \cite{EHH}. In \cite[Lemma 3.3]{HHLT}, the existence of $W^{1,\varphi}$-class approximation was obtained (in the case of a rectangular domain) under the assumptions that $\frac{1}{p}$ is log-H\"{o}lder continuous (i.e.\ \eqref{eq:lhc} holds) and \eqref{eq:slhc1} holds for $y\in  \{p=1\}.$ This sequence can be further smoothed by \cite[Corollary 4.6]{Juusti}. Thus, in the variable exponent case in a rectangular domain, our results---including Theorem \ref{thm:localcharacterisation}---hold assuming that \eqref{eq:lhc} is satisfied and \eqref{eq:slhc1} holds uniformly in $y \in \{p=1\}$. For the double-phase functional a different set of assumptions on the weight $a$ which ensures existence of approximations was given in \cite{HarjulehtoHasto2021}. \\


\noindent {\bf Acknowledgments.} The work of the first author has been supported by the Austrian Science Fund (FWF), grants 10.55776/ESP88 and 10.55776/I5149. The second and third authors received support from the grant 2020/36/C/ST1/00492 of the National Science Centre (NCN), Poland. The third author has been supported by the Special Account for Research Funding of the National Technical University of Athens. The third author is also grateful for the hospitality and support of the Institute of Mathematics of the Polish Academy of Sciences (IM PAN) in Warsaw, where part of this work was carried out. For the purpose of open access, the authors have applied a CC BY public copyright licence to any Author Accepted Manuscript version arising from this submission.

\bibliographystyle{asdfgh}
\bibliography{bib.bib}

\end{document}